\definecolor{cadmiumgreen}{rgb}{0.0, 0.42, 0.24}
\newtheorem{thm}{Theorem}[section]
\newtheorem{prop}[thm]{Proposition}
\newtheorem{cor}[thm]{Corollary}
\theoremstyle{remark}
\newtheorem{remark}[thm]{Remark}
\newtheorem{example}[thm]{Example}
\theoremstyle{definition}
\newtheorem{definition}[thm]{Definition}
\renewcommand*\env@matrix[1][*\c@MaxMatrixCols c]{
  \hskip -\arraycolsep
  \let\@ifnextchar\new@ifnextchar
  \array{#1}}
\newcommand*\isom{\xrightarrow{\sim}}
\newcommand{\divisor}{\operatorname{div}}
\newcommand{\ord}{\operatorname{ord}}
\newcommand{\Hom}{\operatorname{Hom}}
\newcommand{\Spec}{\operatorname{Spec}}
\newcommand{\Sk}{\operatorname{Sk}}
\def\opn#1#2{\def#1{\operatorname{#2}}}
\opn\Vor{Vor}
\def\qq{\mathbb{Q}}
\def\rr{\mathbb{R}}
\def\zz{\mathbb{Z}}
\def\cc{\mathcal{C}}
\def\gg{\mathbb{G}}
\def\nn{\mathcal{N}}
\def\mm{\mathcal{M}}
\def\ll{\mathcal{L}}
\def\bb{\mathcal{B}}
\def\pp{\mathcal{P}}
\def\tt{\mathcal{T}}
\def\aa{\mathcal{A}}
\def\oo{\mathcal{O}}
\def\dd{\mathcal{D}}
\def\ee{\mathcal{E}}
\def\hh{\mathcal{H}}
\def\d{\mathrm{d}}
\def\for{\mathrm{for}}
\def\id{\mathrm{id}}
\def\an{\mathrm{an}}
\def\val{\mathrm{val}}
\def\sp{\mathrm{sp}}
\def\t{\mathrm{t}}
\def\h{\mathrm{h}}
\def\m{\mathrm{m}}
\def\trop{\mathrm{trop}}
\numberwithin{equation}{section}
\subjclass[2020]{\href{https://mathscinet.ams.org/msc/msc2020.html?t=11G10}{11G10},
\href{https://mathscinet.ams.org/msc/msc2020.html?t=11G50}{11G50},
\href{https://mathscinet.ams.org/msc/msc2020.html?t=14G40}{14G40},
\href{https://mathscinet.ams.org/msc/msc2020.html?t=14K25}{14K25},
\href{https://mathscinet.ams.org/msc/msc2020.html?t=14T25}{14T25},
\href{https://mathscinet.ams.org/msc/msc2020.html?t=14G20}{14G20},
\href{https://mathscinet.ams.org/msc/msc2020.html?t=14K05}{14K05}}
\begin{document}

\title[Canonical local heights and Berkovich skeleta]{Canonical local heights and Berkovich skeleta}

\author{Robin de Jong}
\address{Leiden University \\ Einsteinweg 55  \\ 2333 CC Leiden  \\ The Netherlands }
\email{\href{mailto:rdejong@math.leidenuniv.nl}{rdejong@math.leidenuniv.nl}}

\author{Farbod Shokrieh}
\address{University of Washington \\ Box 354350 \\ Seattle, WA 98195 \\ USA }
\email{\href{mailto:farbod@uw.edu}{farbod@uw.edu}}

\begin{abstract} 
 We discuss canonical local heights on abelian varieties over non-archimedean fields from the point of view of Berkovich analytic spaces. Our main result is a refinement of N\'eron's classical result relating canonical local heights with intersection multiplicities on the N\'eron model.
We also revisit Tate's explicit formulas for N\'eron's canonical local heights on elliptic curves. Our results can be viewed as extensions of Tate's formulas to higher dimensions.
 \end{abstract}

\maketitle

\setcounter{tocdepth}{1}
\tableofcontents

\thispagestyle{empty}

\section{Introduction} \label{sec:intro}

\renewcommand*{\thethm}{\Alph{thm}}

\subsection{Background}
In 1965, N\'eron \cite{neron} proved the existence of functorial local height functions for divisors on abelian varieties over global fields (see also \cite[Chapter~11]{lang} or \cite[\S 9.5]{bg_heights}). In his proof, he made use of the ``minimal'' models of abelian varieties that he found a year earlier, and are nowadays named after him. N\'eron's canonical local height functions are uniquely determined up to additive constants.

Let $A$ be an abelian variety over a global field $K$, let $v$ be a place of $K$, and let $D$ be a divisor on $A$. We denote by
\[ \lambda_{D,v} \colon A(K_v) \setminus |D| \to \rr \]
any N\'eron's canonical local height function for $D$, where $|D|$ denotes the support of~$D$. 

Assume that the place $v$ is \emph{non-archimedean}. We let $\nn_v$ denote the N\'eron model of $A$ over $v$, and $\varPhi_v$ the group of connected components of the special fiber of $\nn_v$. 
One of N\'eron's results (\cite[\S III.4, Th\'eor\`eme~1, p.~319]{neron}, see also \cite[\S11.5]{lang}) is that there is a map $\gamma_v \colon \varPhi_v \to \rr$ such that for all $x \in A(K_v) \setminus |D|$ the equality
\begin{equation} \label{eqn:def_gamma}
\lambda_{D,v}(x) = \mathbf{i}_v(x,D) + \gamma_v (x_0) 
 \end{equation}
holds in $\rr$. Here $x_0$ denotes the element of $\varPhi_v$ to which $x$ specializes, and $\mathbf{i}_v(x,D) $ denotes the intersection multiplicity of $\overline{x}$, the Zariski closure of $x$ in $\nn_v$, with the divisor $\overline{D}$, the \emph{thickening} of $D$ in $\nn_v$, on the regular scheme $\nn_v$.

N\'eron also showed that it is possible to choose the function $\lambda_{D,v}$ in such a way that the map $\gamma_v$ takes values in $\frac{1}{2N}\zz$, where $N \in \zz_{>0}$ is any positive integer such that $N \cdot \varPhi_v =0$.
It is customary to simply pick $\lambda_{D,v}(x) = \mathbf{i}_v(x,D)$ when $v$ is a place of good reduction of $A$. With this choice made, the canonical local  heights add up to give a global height on $A(K)$ associated to the line bundle determined by~$D$. This global height equals the canonical (N\'eron--Tate) global height, up to an additive constant.

In a 1968 letter to Serre, Tate gave explicit formulas for N\'eron's canonical local height functions in the case of \emph{elliptic curves} (see \cite[Chapter 3, \S5]{langElliptic}, \cite[\S6.5]{lectures_mordell}, \cite[Chapter VI]{sil_advanced}), and indicated a way of normalizing them such that the resulting global height exactly equals the canonical height.

\subsection{Our contribution}
We study N\'eron's canonical local heights on abelian varieties from the point of view of \emph{Berkovich analytic spaces}. With this point of view, we are able to formulate a refinement of N\'eron's result (see Theorem~\ref{thm:A}).
As we will see, the choice $\lambda_{D,v}(x) = \mathbf{i}_v(x,D)$, when $v$ is a place of good reduction of $A$, is not merely ad hoc; we will define the notion of a \emph{normalized} canonical local height, and we will see that it equals $\mathbf{i}_v(x,D)$ in the good reduction case (Corollary~\ref{cor:normalized_good}). 

Our results can be viewed as extensions of Tate's formulas to higher dimensions. In fact, in \S\ref{sec:elliptic}, we revisit Tate's formulas from our point of view. 
It turns out that, for elliptic curves, our normalized canonical local heights coincide with the classical normalized local heights introduced by Tate. 

One of our tools is \emph{non-archimedean theta functions} and \emph{their tropicalizations}, as studied in \cite{frss}. Another tool in our proof is a special kind of Mumford models of abelian varieties that we call \emph{tautological models}. We study some of their properties, and make explicit the connection with the models mentioned and studied by Alexeev and Nakamura in \cite{an}. 

A fundamental insight, that we use critically in this paper, is that the local height functions $\lambda_{D,v}$ as constructed by N\'eron at the non-archimedean place $v$, extend canonically as \emph{Green's functions} on the \emph{Berkovich analytification} of $A$ at~$v$; see, e.g., \cite[\S1.3]{cl_overview}, \cite[\S2]{clt}.

We hope that our presentation can help make non-archimedean canonical local heights algorithmically accessible. In particular, in the case that $D$ corresponds to a \emph{principal polarization}, our formulas are already quite explicit (see Theorem~\ref{thm:B2}).

Using Theorem~\ref{thm:A}, we can write down a precise \emph{local-to-global formula} for the canonical global height (see Theorem~\ref{thm:B}), in which no unspecified additive constants appear. Further applications of our point of view will be presented in future work.

\subsection{Skeleta and theta functions}

Let $R$ be a complete discrete valuation ring, with fraction field $F$. We assume the absolute value on $F$ is normalized so that a uniformizer has valuation one and, hence, the value group is identified with $\zz$. Let $A$ be an abelian variety over $F$, with split semi-abelian reduction over $R$. Let $L$ be an ample (and, in particular, effective) line bundle on $A$. 

We will freely use terminology related to Berkovich analytic spaces and metrized line bundles on them as developed in, e.g., \cites{be, cl_overview, cl, clt, gu, gu_trop}.
Denote by $A^\an$ the Berkovich analytification of $A$.  Throughout, all analytifications will be over the completion of an algebraic closure of $F$. 

Raynaud's theory of non-archimedean uniformization (discussed in the context of Berkovich analytic spaces in, e.g., \cite[\S6.5]{be}) gives rise to finitely generated free abelian groups $X, Y$ of the same rank together with an injective homomorphism $Y \to X^*$, where $X^*=\Hom(X,\zz)$. The real torus $\varSigma = X_\rr^*/Y$ turns out to be naturally a subset of $A^\an$ and is called the \emph{canonical skeleton} of $A^\an$. It comes with a natural \emph{tropicalization map} $\overline{\val} \colon A^\an \to \varSigma$. It has the property that for $x \in A(F)$ we have $\overline{\val}(x) \in X^*/Y$.

Let $D$ be an effective divisor representing $L$, and let $s \in H^0(A,L)$ be a global section whose divisor is $D$. 
After equipping $L$ with a rigidification, we obtain (non-canonically) a \emph{non-archimedean theta function} $f$ associated to our given section $s$. Following \cite[\S 4]{frss}, we have associated to $f$ its \emph{tropicalization}, which is a continuous concave piecewise integral affine function $f_\trop \colon X_\rr^* \to \rr$. In \S\ref{sec:taut_metric}, we will define a normalized version $\| f_\trop \| \colon X_\rr^* \to \rr$ of $f_\trop$,  which descends along the $Y$-action on $X_\rr^*$ to give a map
\[ \| f_\trop \| \colon \varSigma \to \rr \, . \]
The map $\| f_\trop \|$ turns out to be canonically attached to the section $s$.

\subsection{Main results}

Let $\nn$ be the N\'eron model of $A$ over $R$.
Let $\varPhi_\nn$ denote the group of connected components of the special fiber  of $\nn$. We have a \emph{specialization map} $\sp \colon A(F) \to \varPhi_\nn$, which is surjective.

Denote by $L^\an$ the Berkovich analytification of the ample line bundle~$L$. The rigidification of $L$ and the group structure of $A$ induce a \emph{canonical metric} on $L^\an$, denoted $\|\cdot\|_L$ (see, e.g., \cite[Example~3.7]{gu}). Then, the function 
\[ -\log \|s\|_L \colon A^\an \setminus |D^\an| \to \rr \]
defines a {\em Green's function} with respect to the divisor $D$ (see, e.g., \cite[\S1.3]{cl_overview}, \cite[\S2]{clt}). The restriction of $-\log \|s\|_L $ to $A(F) \setminus |D|$ is a N\'eron's canonical local height with respect to $D$. This follows from the discussion in \cite[\S III.1]{mb} or in \cite[\S 9.5]{bg_heights}.
 
 Let $\sp \colon A(F) \to \varPhi_\nn$ denote the specialization map.
By \cite[Corollary~III.8.2]{fc}, the map $\varPhi_\nn \to X^*/Y$ sending $\sp(x)$, for $x \in A(F)$, to $\overline{\val}(x)$ is an isomorphism of groups. Therefore, we view the group $\varPhi_\nn$ canonically as a subgroup of the canonical skeleton $\varSigma=X_\rr^*/Y$ of $A^\an$.
 
Recall that we have fixed a non-zero global section $s \in H^0(A,L)$ whose divisor is~$D$. Let $f$ be a theta function for $s$ and let $\|f_\trop\| \colon \varSigma \to \rr$ be the associated normalized tropicalized theta function. 

\begin{thm} \label{thm:A} Let $N \in \zz_{>0}$ be any positive integer such that $N \cdot \varPhi_\nn =0$.
\begin{itemize} 
\item[(i)] The restriction of the function $\|f_\trop\| $ to $\varPhi_\nn$ takes values in $ \frac{1}{2N}\zz$.
\item[(ii)] Let $x \in A(F) \setminus |D|$ and write $x_0=\sp(x)$. The equality
\begin{equation} \label{Neron}
 -\log \|s(x)\|_L = \mathbf{i}(x,D) + \|f_\trop\| (x_0) 
 \end{equation}
holds. Here $\mathbf{i}(x,D)$ denotes the intersection multiplicity of the Zariski closure of $x$ in $\nn$ with the thickening of $D$ over $\nn$. 
\end{itemize}
\end{thm}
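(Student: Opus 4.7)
The function $-\log\|s\|_L$ restricted to $A(F)\setminus|D|$ is a N\'eron canonical local height for $D$, as noted just above the statement. So by N\'eron's classical theorem (equation~\eqref{eqn:def_gamma}), there exists a function $\gamma\colon \varPhi_\nn \to \rr$ with
\[
-\log\|s(x)\|_L \;=\; \mathbf{i}(x,D) + \gamma(\sp(x))
\]
for all $x \in A(F)\setminus|D|$, and moreover $\gamma$ takes values in $\tfrac{1}{2N}\zz$. Under the identification $\varPhi_\nn \isom X^*/Y \subset \varSigma$ recalled in the text, the entire content of the theorem reduces to showing $\gamma = \|f_\trop\||_{\varPhi_\nn}$; part~(i) then follows at once from N\'eron's integrality statement.

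To identify $\gamma$, the plan is to exploit the non-archimedean uniformization $p \colon E^\an \to A^\an$: pulling back $s$ along $p$ yields the theta function $f$ on $E^\an$, and the normalization $\|f_\trop\|$ constructed in \S\ref{sec:taut_metric} is designed precisely so that the restriction of $-\log\|s\|_L$ to the canonical skeleton $\varSigma \subset A^\an$ coincides with $\|f_\trop\|$ --- the normalization being exactly what cancels the ambiguity in the choice of theta function. Consequently, if $\tau\colon A^\an \to \varSigma$ denotes the retraction, then for every $x \in A(F)\setminus|D|$ one obtains
\[ (-\log\|s\|_L)(\tau(x)) \;=\; \|f_\trop\|(\overline{\val}(x)). \]

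What then remains is the model-theoretic compatibility
\[ -\log\|s(x)\|_L \;-\; (-\log\|s\|_L)(\tau(x)) \;=\; \mathbf{i}(x,D), \]
which, combined with the preceding identity and the fact that $\sp(x)$ corresponds to $\overline{\val}(x)$, delivers $\gamma(\sp(x)) = \|f_\trop\|(\overline{\val}(x))$. The natural way to establish this is to work on a tautological model $\xx$ of $A$: by construction, $\xx$ dominates the N\'eron model $\nn$, its strata are indexed by the Voronoi/Delaunay decomposition of $X^*_\rr/Y$, and the canonical metric $\|\cdot\|_L$ is realized as a model metric on $\xx$. On $\xx$, the function $-\log\|s(x)\|_L$ equals the intersection multiplicity of $\bar x$ with the thickening of $D$ on $\xx$, and the difference between this intersection on $\xx$ and the intersection on $\nn$ is governed by the piecewise-affine description of $\|f_\trop\|$ on the cell containing $\overline{\val}(x)$. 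The hardest part will be exactly this last step: one must pin down the dictionary between strata of $\xx$, Voronoi cells of $X^*_\rr/Y$, and the piecewise-affine behavior of $\|f_\trop\|$, and verify that pushforward of intersection cycles from $\xx$ to $\nn$ produces precisely the skeleton correction $\|f_\trop\|$. Once that combinatorial bookkeeping is in place, (ii) follows, and (i) is immediate.
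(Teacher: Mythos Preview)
Your outline has the right ingredients but gets a key point backwards. The canonical metric $\|\cdot\|_L$ is \emph{not} realized as a model metric on the tautological model $\pp$; in general it is not a model metric at all. What the tautological model provides is a \emph{different} metric $\|\cdot\|_\ll$ on $L^\an$, and the content of Proposition~\ref{thm:quotient} is precisely the identity $\log(\|\cdot\|_\ll/\|\cdot\|_L) = \|f_\trop\|\circ\overline{\val}$. Hence the correct decomposition is
\[
-\log\|s(x)\|_L \;=\; -\log\|s(x)\|_\ll \;+\; \|f_\trop\|(x_0) \;=\; (\overline x_\pp,\dd) \;+\; \|f_\trop\|(x_0),
\]
with $\dd=\divisor_\ll(s)$ and $(\overline x_\pp,\dd)$ the intersection on $\pp$. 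The remaining hard step is then the \emph{equality} $(\overline x_\pp,\dd)=\mathbf{i}(x,D)$ of intersection numbers --- not a difference governed by $\|f_\trop\|$ as you suggest. The paper proves this by refining $\pp$ to a \emph{regular} Mumford model $\widetilde\pp$ (the tautological model need not be regular, and does not dominate $\nn$) into which $\nn$ embeds as an open subscheme; the relative Cartier property of $\dd$ (Proposition~\ref{thm:rel_eff}) then forces its pullback to $\nn$ to coincide with the thickening $\overline D$, and the intersection numbers match.

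Your route to part~(i) also has a gap: N\'eron only shows that \emph{some} choice of local height has $\gamma$ landing in $\tfrac{1}{2N}\zz$, not that every choice does; local heights differ by additive constants, and so do the resulting $\gamma$'s. So even after proving~(ii) you would still owe an argument. The paper instead proves~(i) directly from the definition $\|f_\trop\|=f_\trop+c_{\trop,\rr}$: for $\nu\in X^*$ one has $f_\trop(\nu)\in\zz$ by piecewise integral affinity, and $c_{\trop,\rr}(\nu)\in\tfrac{1}{2N}\zz$ from~\eqref{eq:c_trop2_bis} when $N\nu\in Y$ (Proposition~\ref{prop:properties_norm_f_trop}(iv)). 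Invoking N\'eron's classical result at the outset is in any case unnecessary: the argument is self-contained.
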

Every divisor on $A$ is a difference of two ample divisors. Therefore, Theorem~\ref{thm:A} gives a  formula for the canonical local height associated to \emph{any} divisor on $A$.

In the case that $L$ defines a \emph{principal polarization}, we can make the function $\|f_\trop\|$ quite explicit. In this case, we have an intimate connection with the \emph{tropical Riemann theta function} $\varPsi \colon X_\rr^* \to \rr$ associated to $L$ and its normalized version $\|\varPsi\| \colon \varSigma \to \rr$, which has a very simple description in terms of the Euclidean structure on $X_\rr^*$ determined by $L$ (see \S\ref{sec:pp} for details).  

\begin{thm}  \label{thm:B2} Assume that $L$ defines a principal polarization. There exists a unique element $\kappa \in \varSigma$ and a unique $r \in \qq$ such that $2\kappa \in X^*/Y$ and 
\[\|f_\trop\| = \|\varPsi\| \circ \t_\kappa + r\] 
on $\varSigma$. Here $\t_\kappa \colon \varSigma \to \varSigma$ denotes translation by $\kappa$. 
\end{thm}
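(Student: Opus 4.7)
My plan is to exploit the principal polarization hypothesis in two ways: by Riemann--Roch, $h^0(A,L)=1$, so $s$ is determined up to a nonzero scalar; and any ample line bundle defining the principal polarization is a translate of a fixed \emph{symmetric} representative.

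For existence, I would choose a symmetric line bundle $L_0$ on $A$ defining the same principal polarization as $L$ and whose canonical theta function tropicalizes, after normalization, to $\|\varPsi\|$. (Such an $L_0$ is intrinsic to the polarization, unique up to translation by a $2$-torsion point of $A$.) There then exists $a\in A(\bar F)$ with $L\cong t_a^*L_0$, and since $h^0(A,L)=h^0(A,L_0)=1$, the section $s$ agrees, up to a scalar $c\in\bar F^\times$, with $t_a^*s_0$, where $s_0$ is the canonical section of $L_0$. Lifting to the Raynaud uniformization $E\to A^\an$ and tropicalizing, I expect to obtain on $X_\rr^*$ an identity
\[
  f_\trop(u) = \varPsi^{\natural}(u-\alpha) + \ell(u) + v(c),
\]
where $\alpha=\overline{\val}(\tilde a)$ for some lift $\tilde a$ of $a$, $\varPsi^{\natural}$ is the unnormalized tropical theta function of $\theta_0$, and $\ell$ is an affine function on $X_\rr^*$ coming from the comparison of rigidifications. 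The normalization $\|\cdot\|$ of \S\ref{sec:taut_metric} is designed precisely to neutralize rigidification/scalar ambiguities and to enforce $Y$-invariance, so applying it to both sides should yield $\|f_\trop\| = \|\varPsi\|\circ\t_\kappa + r$ on $\varSigma$, where $\kappa$ is the class of $\alpha$ in $\varSigma$ and $r\in\qq$ gathers $v(c)$ and the rational correction intrinsic to $\|\cdot\|$.

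For the half-period property, the symmetry $[-1]^*L_0\cong L_0$ gives $[-1]^*L\cong t_{-2a}^*L$, so the ambiguity in the choice of $L_0$ (equivalently, in $a$) lies in $A[2]$; tropicalizing yields $2\kappa\in X^*/Y$. For uniqueness, if $\|\varPsi\|\circ\t_\kappa+r=\|\varPsi\|\circ\t_{\kappa'}+r'$ on $\varSigma$, then $\|\varPsi\|\circ\t_{\kappa-\kappa'}-\|\varPsi\|$ is the constant $r-r'$; since $\|\varPsi\|$ is a nonconstant continuous piecewise integral affine function with the non-degenerate Newton subdivision coming from the positive-definite $Q$-form of the polarization, its translate can differ from itself by a constant only when $\kappa=\kappa'$, and then $r=r'$.

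The hard part will be justifying that the normalization $\|\cdot\|$ absorbs the affine term $\ell$ arising from the comparison of rigidifications of $L$ and $t_a^*L_0$, while commuting with translation up to an additive constant in $\qq$. This should be a formal consequence of the way $\|\cdot\|$ is built from the quasi-periodicity properties of theta functions in the polarization class of $L$, but making the bookkeeping clean under the shift $u\mapsto u-\alpha$ is the technical core of the argument.
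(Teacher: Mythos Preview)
Your approach is more circuitous than the paper's and has a couple of gaps. The paper's proof is essentially three lines: it invokes Proposition~\ref{prop:tropical_Riem} (which is \cite[Theorem~4.9]{frss}) to get $f_\trop = \varPsi \circ \t_k + r'$ on $X_\rr^*$ for unique $k \in X_\qq^*$ and $r' \in \qq$; then, using that \eqref{eqn:c_trop} forces $c_{\trop,\rr}(\nu) = \tfrac{1}{2}[\nu,\nu] + [\nu,k]$, one simply adds $c_{\trop,\rr}$ to both sides and regroups:
\[
\|f_\trop\|(\nu) = \varPsi(\nu+k) + \tfrac{1}{2}[\nu,\nu] + [\nu,k] + r' = \|\varPsi\|(\nu+k) - \tfrac{1}{2}[k,k] + r'.
\]
So the ``hard part'' you flag --- that the normalization absorbs the affine correction and commutes with translation up to a rational constant --- is handled by a direct algebraic identity, with no geometric comparison of rigidifications needed.

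Your reduction to a symmetric $L_0$ is not self-contained: you assume there is an $L_0$ whose normalized tropical theta function is already $\|\varPsi\|$, but that is precisely the theorem in the symmetric case, so you are deferring rather than proving the existence statement. Your uniqueness argument also mischaracterizes $\|\varPsi\|$: it is piecewise \emph{quadratic}, not piecewise integral affine (see \eqref{eqn:norm_theta_simple}), so the ``Newton subdivision'' reasoning does not apply as stated. Uniqueness is in fact immediate from the observation that $\|\varPsi\|$ has a unique global minimum at $0 \in \varSigma$ with value $0$, so $\kappa$ is the unique minimizer of $\|f_\trop\| - r$ and $r$ is the minimum value; alternatively, it is inherited from the uniqueness of $k$ and $r'$ in Proposition~\ref{prop:tropical_Riem}.
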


We call the element $\kappa \in \varSigma$ in Theorem~\ref{thm:B2} the \emph{tropical theta characteristic} of~$L$.

\subsection{Earlier work} 

The formula in \eqref{eqn:def_gamma} due to N\'eron had been refined by Moret-Bailly in his monograph \cite{mb} as follows.
As above, let $s \in H^0(A,L)$ be a global section of $L$ whose divisor is~$D$. In \cite[\S III.1]{mb} one finds a construction of a N\'eron's canonical local height function $\langle s, x \rangle $ for $x \in A(F) \setminus |D|$. It is not too hard to see that, in fact, the equality
\[ \langle s, x \rangle = -\log \|s(x)\|_L  \]
holds true for $x \in A(F) \setminus |D|$. Let $N \in \zz_{>0}$ be any positive integer such that $N \cdot \varPhi_\nn =0$. The arguments in  \cite[\S III.1.3--1.4]{mb} prove the following: the rigidified line bundle $L^{\otimes 2N}$ extends uniquely as a \emph{cubical} line bundle $\overline{L^{\otimes 2N}}$ over the N\'eron model~$\nn$. Moreover, for all $x \in A(F) \setminus |D|$, we have the equality
\[  \langle s, x \rangle = \mathbf{i}(x,D) +  \frac{1}{2N} \ord_{V,\overline{L^{\otimes 2N}}}(s^{\otimes 2N}) \, , \]
where $V$ is the component of the special fiber of~$\nn$ to which $x$ specializes. Here, we view $s^{\otimes 2N}$ as a rational section of the line bundle $\overline{L^{\otimes 2N}}$ over $\nn$. 

Combining with Theorem~\ref{thm:A}(ii) we conclude 
\[ \|f_\trop\|(V) =  \frac{1}{2N} \ord_{V,\overline{L^{\otimes 2N}}}(s^{\otimes 2N}) \, , \]
where on the left hand side $V$ is thought of as an element of $\varPhi_\nn$.  This equality gives an alternative explanation of the result in Theorem~\ref{thm:A}(i).

We note that special cases of some of our results have previously been shown by Turner \cite{turner}, Hindry \cite{hi}, and Werner \cite{we} using rigid analytic methods. The role of the Berkovich skeleton is certainly hidden behind the scenes in these approaches and, by letting it enter the stage, a more complete picture arises. 

\subsection{Overview of the paper}

In \S\ref{sec:non-arch} we review a number of constructions and results related to uniformization of abelian varieties over non-archimedean fields.
In \S\ref{sec:taut_metric} we connect the canonical metric on line bundles on abelian varieties with their non-archimedean uniformization. We also give our definition of normalization of tropicalized theta functions and establish some of their elementary properties.
In \S\ref{sec:pp} we elaborate on the case that the line bundle at hand defines a principal polarization, and prove Theorem~\ref{thm:B2}.
In \S\ref{sec:taut_Mumford} we discuss the tautological Mumford models associated with the tropicalizations of non-archimedean theta functions.
In \S\ref{sec:Neron} we give a proof of Theorem~\ref{thm:A}.
In \S\ref{sec:normalized} we discuss a canonical way of normalizing non-archimedean canonical local heights.
The purpose of \S\ref{sec:elliptic} is to make things explicit for elliptic curves.
In \S\ref{sec:local-global} we present our explicit local-to-global formula for the canonical global height.

\subsection*{Acknowledgments}
We thank Ana Botero, Jos\'e Burgos Gil, David Holmes and Steffen M\"uller for helpful discussions.
The second-named author was partially supported by NSF CAREER DMS-2044564 grant.

\subsection*{Notation and convention} 

By $\log$ we will always mean the natural logarithm.

Throughout $R$ will be a complete discrete valuation ring, with fraction field $F$. The absolute value on $F$ is normalized such that a uniformizer has valuation one, so the value group is identified with $\zz$.

For an abelian variety $B$ over $F$, we denote its dual abelian variety by $B^t$. Let $P$ denote the rigidified Poincar\'e line bundle over $B \times B^t$.
Assume $M$ is a rigidified {\em ample} line bundle on $B$ and $\varphi_M \colon B \to B^t$ is the polarization determined by $M$. We have a canonical isomorphism of rigidified line bundles
\begin{equation} 
\label{eqn:canonical_rigid}
m^* M  \otimes p_1^*M^{\otimes -1} \otimes p_2^* M^{\otimes -1} = (\id,\varphi_M)^*P 
\end{equation}
over $B \times B$, where $p_1, p_2 \colon B \times B \to B$ denote the two projections and where $m \colon B \times B \to B$ is the addition map.

If $\bb$ is an abelian scheme over $R$ with generic fiber an abelian variety $B$ over $F$, the functor ``restriction to the generic fiber'' from the category of rigidified line bundles on $\bb$ to the category of rigidified line bundles on $B$ is an equivalence of categories  (\cite[Th\'eor\`eme II.1.1(ii)]{mb}). On an abelian scheme, the categories of rigidified line bundles and cubical line bundles are equivalent (\cite[pp.\ 8--9]{fc}). When $M$ is a rigidified line bundle on $B$, we denote by $\|\cdot\|_M$ the model metric on $M$ coming from the unique rigidified (equivalently, cubical) line bundle on $\bb$ that extends $M$.

\renewcommand*{\thethm}{\arabic{section}.\arabic{thm}}

\section{Non-archimedean uniformization} \label{sec:non-arch}

Let $A$ be an abelian variety over the field~$F$, and assume it has split semi-abelian reduction over $R$. 
The purpose of this section is to review a number of constructions and results dealing with the non-archimedean uniformization of $A$. 

\subsection{Raynaud extensions}
Choosing a polarization of~$A$, the Raynaud extension construction (as explained in \cite[\S II.1--2]{fc}) canonically attaches to these data a semi-abelian formal scheme
\begin{equation} \label{eqn:semiab_formal}
 1 \to \tt_\for \to \ee_\for \to \bb_\for \to 0 
\end{equation}
over $R$. This uniquely algebraizes to give a semi-abelian scheme, independent of the chosen polarization,
\begin{equation} \label{eqn:semiab}
 1 \to \tt \to \ee \to \bb \to 0 
\end{equation}
over $R$, with $\tt$ a split torus, and $\bb$ an abelian scheme. We write the generic fiber of this semi-abelian scheme as
\begin{equation} \label{eqn:semiab_generic}
 1 \to T \to E \to B \to 0 \, . 
\end{equation}
Upon base changing the semi-abelian formal scheme \eqref{eqn:semiab_formal} to the ring of integers of the completion of an algebraic closure of $F$, and taking generic fibers, we arrive at an extension of Berkovich analytic groups
\begin{equation} \label{eqn:semiab_Berko}
 1 \to T^\an \to E^\an \stackrel{q}{\to}  B^\an \to 0 \, . 
\end{equation}
We set
\[ X = \Hom(T,\gg_\m) \, , \!\quad Y = \Hom(T^t,\gg_\m) \, , \!\quad X^*=\Hom(X,\zz) \, , \!\quad Y^* = \Hom(Y,\zz) \, , \]
where $T^t$ denotes the dual torus of $T$. 

Let $u \in X$ and denote by $\chi^u \colon T \to \gg_\m$ the corresponding character. From \eqref{eqn:semiab_Berko} we obtain a pushout diagram of Berkovich analytic groups
\begin{equation}  \label{eqn:pushout} \xymatrix{ 1 \ar[r] & T^\an  \ar[d]^{\chi^u} \ar[r] & E^\an \ar[d]^{e_u} \ar[r]^{q} & B^\an\ar@{=}[d]  \ar[r] & 0  \\ 
1 \ar[r] & \gg_\m^\an \ar[r] & E_u^\an \ar[r] & B^\an \ar[r] & 0  }   
\end{equation}
with $E_u$ the $\gg_\m$-torsor associated to a canonical flat rigidified line bundle, also denoted $E_u$, on $B$. This pushout construction gives rise to a homomorphism $\varOmega' \colon X \to B^{t,\an}$ sending $u \in X$ to the equivalence class of $E_u^\an$. Here $B^t$ is the dual abelian variety of $B$.  In the same spirit, we have a homomorphism $\varOmega \colon Y \to B^\an$.

The homomorphisms $ \varOmega' \colon X \to B^{t,\an}$ and $\varOmega \colon Y \to B^\an$ lift to maps $X \to E^{t,\an}$ and $Y \to E^\an$ fitting in \emph{Raynaud crosses}
 \begin{align} \label{eq:crosses}
 \xymatrix{ 
& & Y \ar[rd]^{\varOmega} \ar[d] & & \\
&T^\an \ar[r] & E^\an \ar[r]^q \ar[d]^p & B^\an & \\
& & A^\an & &}  
 \xymatrix{
& & X \ar[rd]^{\varOmega'} \ar[d] & & \\
&T^{t,\an} \ar[r] & E^{t,\an} \ar[r] \ar[d] & B^{t,\an} & \\
 & & A^{t,\an} & &} 
 \end{align}
in the Berkovich analytic category. 

Let $P$ denote the rigidified Poincar\'e line bundle over $B \times B^t$. For $u' \in Y$ and $u \in X$, we have a canonical identification of $1$-dimensional vector spaces
\[ P|_{\varOmega(u') \times \varOmega'(u)} \cong E_u|_{\varOmega(u')} \, . \]
This gives rise to a map
\[ t \colon Y \times X \to P^\an \, , \quad (u',u) \mapsto e_u(u') \]
inducing a trivialization of $(\varOmega,\varOmega')^*P^\an$ over $Y \times X$.
The resulting map
\[ b \colon Y \times X \to \zz \, , \quad (u',u) \mapsto -\log \| t(u',u) \|_P  \]
is bilinear and non-degenerate, hence realizes $Y$ as a cofinite subgroup of $X^*$.

\subsection{Tropicalization maps} \label{subsec:trop_maps}
Let  $\langle \cdot,\cdot \rangle \colon X \times X^* \to \zz$ denote the evaluation pairing. It naturally extends as a map $X \times X_\rr^* \to \rr$, also denoted by $\langle \cdot,\cdot \rangle$, which is $\rr$-linear in the second coordinate. The {\em tropicalization map} $\val \colon T^\an \to X_\rr^*$ is given by the rule
\begin{equation}
 \langle u, \val(z) \rangle = -\log|\chi^u(z)| \, , \quad u \in X \, , \, z \in T^\an \, . 
\end{equation}
The map $\val \colon T^\an \to X_\rr^*$ is surjective, and extends naturally to give a surjective homomorphism $ E^\an \to X_\rr^*$, also denoted $\val$, by setting
\begin{equation} \label{def_trop}
 \langle u, \val(z) \rangle = -\log \|e_u(z) \|_{E_u} \, , \quad u \in X \, , \, z \in E^\an \, . 
\end{equation}
Here, the rigidified line bundle $E_u$ and the map $e_u \colon E^\an \to E^\an_u$ are as in \eqref{eqn:pushout}. 
The map $\val$ extends the identity map on $Y$. 

Write $\varSigma = X_\rr^*/Y$. This is a real torus.
We denote by $\overline{\val} \colon A^\an \to \varSigma$  the unique map making the following  diagram commutative:
\begin{equation} \label{def_tau} \xymatrix{    0 \ar[r] & Y \ar[r] \ar@{=}[d] & E^\an \ar[r]^p \ar[d]^{\val} & A^\an   \ar[r] \ar[d]^{\overline{\val}} & 0 \\
0 \ar[r] & Y \ar[r] & X_\rr^* \ar[r] & \varSigma \ar[r] & 0  } 
\end{equation}

\subsection{Rigidified ample line bundles} \label{subsec:rigidified}

By \cite[Theorem~6.7]{bl} (see also \cite[Theorem~3.6]{frss}), to give a rigidified ample line bundle $L$ on $A$ is to give a triple $(M,\varPhi,c)$ with
\begin{itemize}
\item $M$ a rigidified ample line bundle on $B$, 
\item $\varPhi \colon Y \to X$ an injective group homomorphism, 
\item $c \colon Y \to M$ a map over $B$ compatible with the rigidification of $M$, 
\end{itemize}
satisfying the conditions
\begin{equation}\label{eq:rel1}
 \varOmega' \circ \varPhi = \varphi_M \circ \varOmega
\end{equation}
and
\begin{equation}\label{eq:rel2}
 \forall u_1', u_2' \in Y : 
c(u_1' + u_2') \otimes c(u_1')^{-1} \otimes c(u_2')^{-1} = t(u_1', \varphi_M(u_2')) \, . 
\end{equation}
As before, $\varphi_M \colon B \to B^t$ is the polarization determined by the ample line bundle~$M$. The equality in \eqref{eq:rel2} holds under the canonical identification of rigidified line bundles given in \eqref{eqn:canonical_rigid}. The proof of the equivalence is done by descent with respect to the subgroup $Y \subset E^\an$. In particular, whenever $(M,\varPhi,c)$ is a triple for $L$, we have an identification $p^*L^\an = q^*M^\an$ of rigidified line bundles on $E^\an$. 

\begin{remark} \label{rmk:triples for L}
When $(M,\varPhi,c)$ and $(M',\varPhi',c')$ are triples for $L$, there exist $u \in X$, an isomorphism of rigidified line bundles $M' = M \otimes E_u$ over $B$, and an equality of trivializations $c' = c \otimes \varepsilon_u$ over $Y$, where $\varepsilon_u \colon Y \to E_u$ is the restriction of the map $e_u \colon E^\an \to E_u^\an$ to the subgroup $Y \subset E^\an$. Moreover, we have $\varPhi= \varPhi'$.
\end{remark}

Fix a rigidified ample line bundle $L$ and a triple $(M,\varPhi,c)$ for $L$. We define
\begin{equation} \label{eq:c_trop1}
c_\trop \colon  Y \to \zz \, , \quad  u' \mapsto -\log \| c(u') \|_M \, .      
\end{equation}
The canonical isomorphism \eqref{eqn:canonical_rigid} is an isometry when both sides are equipped with the metrics induced from the model metrics $\|\cdot\|_M$ and $\|\cdot\|_P$. This translates \eqref{eq:rel2} into the equality
\begin{equation} \label{eqn:c_trop_identity}
 \forall u_1', u_2' \in Y : c_\trop(u_1'+u_2') - c_\trop(u_1') -c_\trop(u_2') = b(u_1',\varPhi(u_2')) \, . 
\end{equation}
The map $Y \times Y \to \zz$ given by $(u_1',u_2') \mapsto b(u_1',\varPhi(u_2')) $ is bilinear, symmetric and positive definite. We have $c_\trop(0)=0$ because $c$ gives a trivialization compatible with the rigidification of $M$. It follows that $c_\trop \colon Y \to \zz$ can be written uniquely as a quadratic form plus a linear form. More precisely, we have 
\begin{equation}\label{eq:c_trop2}
c_\trop(u') = \frac{1}{2}b(u',\varPhi(u')) + \frac{1}{2}l(u') \, , \quad u' \in Y \, , 
\end{equation}
for a unique $l \in Y^*$. 

\subsection{Non-archimedean theta functions and their tropicalizations} \label{sec:theta_trop}

We continue to work with a rigidified ample line bundle $L$ on the abelian variety $A$.
Fix a triple $(M,\varPhi,c)$ for $L$ as in \S\ref{subsec:rigidified}. From now on, we also fix a non-zero global section $s \in H^0(A,L)$.

Following \cite[\S5]{bl} and \cite[\S3.6]{frss}, the global section $s \in H^0(A,L)$ uniquely determines a non-zero global section $f \in H^0(E^\an, q^*M^\an)$ that descends to $s$ along the uniformization map $p \colon E^\an \to A^\an$. The section $f$ is called the \emph{non-archimedean theta function} associated to $s$ (and the given triple). 

Following \cite[Example~7.2]{gu} (see, also, \cite[\S4.2]{frss}), there is a canonical section $\sigma \colon X_\rr^* \to E^\an$ of the  tropicalization map $\val \colon E^\an \to X_\rr^*$. The map $\sigma$ extends the identity map on $Y$. 

\begin{definition}{\cite[\S4.3]{frss}}
Using the section $\sigma$ of $\val$, one has the \emph{tropicalization} of $f$, given as the map
\begin{equation} \label{def:trop}
 f_\trop \colon X_\rr^* \to \rr \, , \quad \nu \mapsto - \log \| f(\sigma(\nu))\|_{q^*M} \, . 
\end{equation}
\end{definition}
\begin{definition} We define the map
\begin{equation} \label{eq:cocycle}
\mathbf{z} \colon Y \times X_\rr^* \to \rr \, , \quad
\mathbf{z}_{u'}(\nu) = c_\trop(u') + \langle \varPhi(u'),\nu \rangle \, , \quad u' \in Y \, , \, \nu \in X_\rr^*  
\end{equation}
attached to the triple $(M,\varPhi,c)$. 
The map $\mathbf{z}$ is a $1$-cocycle for the $Y$-action by translations on $X_\rr^*$, in the sense that the rule
\[ \forall u',v' \in Y \quad \forall \nu \in X_\rr^* \, : \quad  \mathbf{z}_{u'+v'}(\nu) = \mathbf{z}_{u'}(v'+\nu) + \mathbf{z}_{v'}(\nu) \]
is satisfied. This cocycle condition follows from \eqref{eqn:c_trop_identity}.
\end{definition}

\begin{prop} \label{prop:f_trop_properties} The following properties hold for the tropicalization~$f_\trop$ of $f$.
\begin{itemize}
\item[(i)] The function $f_\trop$ is continuous, piecewise affine and concave.
\item[(ii)] For any domain of linearity $\Delta$ of $f_\trop$ there exist $u \in X$ and $\ell \in \zz$ such that $f_\trop(\nu) = \langle u,\nu \rangle + \ell$ for $\nu \in \Delta$.
\item[(iii)] For all $u' \in Y$ and for all $\nu \in X_\rr^*$ we have $f_\trop(\nu) = f_\trop(\nu + u') + \mathbf{z}_{u'}(\nu)$.
\end{itemize}
\end{prop}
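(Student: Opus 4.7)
The plan is to establish (i) and (ii) from a Laurent-expansion description of $f$ in the $T^\an$-direction of $E^\an$, and then derive (iii) directly from the $Y$-equivariance that lets $f$ descend to $s$ along $p \colon E^\an \to A^\an$. Most of the technical work for (i) and (ii) is already contained in \cite[\S 4]{frss}, which I would cite rather than reprove.

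First I would exploit the isotypic decomposition $q_*(q^*M) = \bigoplus_{u \in X} M \otimes E_u^{\otimes -1}$ to write $f = \sum_{u \in X} a_u \otimes \chi^u$ with $a_u \in H^0(B, M \otimes E_u^{\otimes -1})$. The theta-series formalism of \cite{bl} guarantees that the coefficients $-\log \|a_u(b_0)\|_{M \otimes E_u^{\otimes -1}}$ grow quadratically in $u$ by ampleness of $L$, where $b_0 \in B^\an$ is the Shilov point attached to the good model $\bb$; consequently, only finitely many terms dominate over any compact subset of $X_\rr^*$. Using that $\sigma(\nu)$ is the fiber-wise Gauss point of the $T^\an$-torsor fiber of $q$ above $b_0$ with $\val = \nu$, and that Gauss-point norms of Laurent series compute as suprema of monomial norms, I would read off $f_\trop(\nu) = \min_{u \in X} \bigl(-\log \|a_u(b_0)\|_{M \otimes E_u^{\otimes -1}} + \langle u, \nu \rangle\bigr)$ as a locally finite minimum of affine functions. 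Properties (i) and (ii) then follow at once: slopes lie in $X$ by construction, and intercepts lie in $\zz$ since the model metric $\|\cdot\|_M$ at the Shilov point $b_0$ takes values in $e^\zz$.

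For (iii), I would use that $f$ is, by construction, equivariant for the $Y$-action on $q^*M^\an$ built from $(\varPhi, c)$. Explicitly, translation by $\varOmega(u')$ on $E^\an$ induces an isomorphism $q^*M \cong q^*M$ factoring through the theorem-of-the-square identification $q^*M \cong q^*M \otimes q^* E_{\varPhi(u')}$ (using $\varphi_M \circ \varOmega = \varOmega' \circ \varPhi$), with the $M$-factor trivialized by $c(u')$ and the $E_{\varPhi(u')}$-factor by the canonical section $e_{\varPhi(u')}$. Writing out the resulting transformation law that $f$ satisfies between $\sigma(\nu)$ and $\sigma(\nu + u') = \varOmega(u') + \sigma(\nu)$ (the latter by $Y$-equivariance of $\sigma$), and passing to $-\log \|\cdot\|_{q^*M}$, the multiplicative cocycle becomes additive: the $c(u')$-factor contributes $c_\trop(u')$ by \eqref{eq:c_trop1}, and $e_{\varPhi(u')}(\sigma(\nu))$ contributes $\langle \varPhi(u'), \nu \rangle$ by \eqref{def_trop}. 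With the sign convention dictated by the descent of $f$ to $s$, rearranging yields $f_\trop(\nu) = f_\trop(\nu + u') + \mathbf{z}_{u'}(\nu)$.

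The main obstacle will be making the Laurent-series description and its convergence precise in the Berkovich/formal-scheme framework, and pinning down that $\sigma$ is the fiber-wise Gauss section in the semi-abelian setting. Both are substantial technical inputs that I would import from \cite[\S 4]{frss} and \cite[Example 7.2]{gu} rather than reprove from scratch; once granted, properties (i)--(iii) reduce to the short computations above, with careful bookkeeping of signs in the transformation rule being the only delicate point.
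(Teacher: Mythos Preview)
Your proposal is correct and follows essentially the same route as the paper: the paper likewise obtains (i) and (ii) from the Fourier decomposition $f = \sum_{u \in X} a_u \otimes e_u$ together with the explicit formula $f_\trop(\nu) = \min_{u \in X}\{-\log\|a_u(\xi)\|_{M\otimes E_u} + \langle u,\nu\rangle\}$ at the Gauss point $\xi$ of $B^\an$ (citing \cite[Theorem~4.7]{frss}), and derives (iii) from the $Y$-transformation law of non-archimedean theta functions as in \cite[\S5]{bl} and \cite[Proposition~3.13]{frss}. Your write-up supplies a bit more detail on why the intercepts are integral and on how the cocycle decomposes into the $c_\trop$ and $\langle\varPhi(u'),\cdot\rangle$ pieces, but the argument is the same.
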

These properties are essentially shown in the proof of \cite[Theorem~4.7]{frss}. We briefly give some details. When $H$ is a line bundle on $B$, we write $\hh$ for the invertible sheaf of linear functionals on $H$. In particular, a section $h$ of the line bundle $H$ gives rise to a section of the invertible sheaf $\hh^{\otimes -1}$, also denoted~$h$. 

The non-archimedean theta function $f$ has a canonical Fourier decomposition,
\[ f = \sum_{u \in X} a_u \otimes e_u \, , \quad a_u \in H^0(B,\mm \otimes \ee_u) \, , \]
where $\mm$ is the invertible sheaf on $B$ corresponding to $M$, where $\ee_u$ is the invertible sheaf on $B$ corresponding to $E_u$, and $e_u \in H^0(B,\ee_u^{\otimes -1})$ for $u \in X$. This leads to the formula
 \begin{equation} \label{eqn:explicit_f_trop}
  f_\trop(\nu) = \min_{u \in X} \left\{ -\log \|a_u(\xi)\|_{M \otimes E_u} + \langle u,\nu \rangle \right\} < \infty \, , \quad \nu \in X_\rr^* \, .
 \end{equation}
Here $\xi$ is the Gauss point of $B^\an$, i.e., the Shilov point of $B^\an$ given by the generic point of the special fiber of the abelian scheme $\bb$. From the formula \eqref{eqn:explicit_f_trop} for $f_\trop$  properties (i) and (ii) readily follow.  Property (iii) follows from the transformation behavior of non-archimedean theta functions with respect to the $Y$-action as found in \cite[\S5]{bl} and revisited in \cite[Proposition~3.13]{frss}.

\begin{remark} \label{rmk:triples and f_trop}
The dependence of $f_\trop$ on the choice of triple is easy to describe. Let $u \in X$ be as in Remark~\ref{rmk:triples for L}. If the rigidified ample line bundle $M$ is changed into $M \otimes E_u$, and the trivialization $c$ into $c \otimes \varepsilon_u$, then the non-archimedean theta function $f \in H^0(E^\an, q^*M^\an)$ changes into $f \otimes e_u^{\otimes -1}$. The effect of this is that $f_\trop \colon X_\rr^* \to \rr$ changes by adding the global linear map on $X_\rr^*$ given by $-u \in X$. 
\end{remark}

\section{Normalized tropicalized theta functions} \label{sec:taut_metric}

In this section we connect the canonical metric on line bundles on abelian varieties with their non-archimedean uniformization. We also define the normalization of tropicalized theta functions (Definition~\ref{def:normaltheta}) and mention a few elementary properties.

We continue to work with a rigidified ample line bundle $L$ on $A$, as well as a triple $(M,\varPhi,c)$ for $L$. We will continue to use the non-archimedean uniformization of $A^\an$, as in \eqref{eq:crosses}.

\subsection{Canonical metric and uniformization} \label{subsec:canonical}

We denote by $\|\cdot\|_L$ the canonical metric on $L^\an$, see \cite[Example~3.7]{gu}. 
For instance, when $L$ is symmetric, and $n \in \zz_{\ge 2}$, we have a unique isomorphism of rigidified line bundles $[n]^*L \isom L^{\otimes n^2}$, and the canonical metric on $L^\an$ is the unique metric on $L^\an$ for which the induced isomorphism $[n]^*L^\an \isom L^{\an, \otimes n^2}$ becomes an isometry. 

\begin{example} Assume $A$ extends as an abelian scheme $\aa$ over $R$. Then $L$ extends uniquely as a rigidified ample line bundle $\ll$ on $\aa$. The canonical metric on $L^\an$ coincides with the model metric determined by $\ll$.  The canonical metric is, in general, not a model metric, but it is always an admissible metric \cites{zhsmall,gu_trop,cl_overview}.
\end{example}

We recall the map $\val \colon E^\an \to X_\rr^*$ from \eqref{def_trop}, and the map $c_\trop \colon Y \to \zz$ from \eqref{eq:c_trop1}. As seen in \eqref{eq:c_trop2}, we have the relation
\begin{equation} \label{eq:c_trop2_bis}
 c_\trop(u') = \frac{1}{2}b(u',\varPhi(u')) + \frac{1}{2}l(u')
\end{equation}
for a unique $l \in Y^*$. We denote by $c_{\trop,\rr} \colon X_\rr^* \to \rr$ the unique extension of $c_\trop$ as a quadratic map on the real vector space $X_\rr^*$. Note that (the homogeneous quadratic part of) $c_{\trop,\rr}$ is positive definite. It is readily verified from \eqref{eq:c_trop2_bis} that 
\begin{equation} \label{eq:cocycle_c_trop}
 c_{\trop,\rr}(\nu + u') = c_{\trop,\rr}(\nu) + \mathbf{z}_{u'}(\nu) \, , \quad \nu \in X_\rr^* \, , \quad u' \in Y \, .
\end{equation}

The result below follows from the discussion in Example~8.15 in \cite{gub_kun} (although not stated there explicitly). For the analogous result in the rigid analytic context (i.e., dealing with only the classical points of $A^\an$), we refer to~\cite[Th\'eor\`eme~C]{hi} and~\cite[Corollary 3.6]{we} in the situation where $A$ has multiplicative reduction, as well as~\cite[Th\'eor\`eme~D]{hi} in the general case. 

Let $s \in H^0(A,L)$ be a non-zero global section, and let $f \in H^0(E^\an,q^*M^\an)$ be the non-archimedean theta function determined by $s$ and the triple $(M,\varPhi,c)$, as in \S\ref{sec:theta_trop}.
Let $D$ be the divisor of the global section $s$ of $L$.
\begin{prop}{\cite[Example~8.15]{gub_kun}} \label{prop:ex_8.15} 
For all $x \in A^\an$ and $z \in E^\an$ with $p(z)=x$ and with $x \notin |D^\an|$ the equality
\[ -\log \|s(x)\|_L = c_{\trop,\rr}( \val (z)) - \log \| f(z) \|_{q^* M}  \]
holds.
\end{prop}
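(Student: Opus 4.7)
The plan is to reformulate the claim as a metric identity and then verify it by descent and uniqueness. Under the canonical identification $p^*L^\an = q^*M^\an$ of rigidified line bundles over $E^\an$, the pullback $p^*\|\cdot\|_L$ and the pullback $q^*\|\cdot\|_M$ of the canonical (equivalently, model) metric on $M^\an$ are two a priori different continuous metrics on the same line bundle, and the stated identity is equivalent to
\[
q^*\|\cdot\|_M \;=\; \exp(c_{\trop,\rr}\circ \val)\cdot p^*\|\cdot\|_L .
\]
Equivalently, $\exp(-c_{\trop,\rr}\circ\val)\cdot q^*\|\cdot\|_M$ is $Y$-invariant on $E^\an$ and descends along $p$ to the canonical metric on $L^\an$. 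I would prove this in two steps: first, verify $Y$-invariance so that the weighted metric genuinely descends to a metric $\|\cdot\|'$ on $L^\an$; then identify $\|\cdot\|'$ with $\|\cdot\|_L$.

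For the first step, the key cocycle computation is short. For $u'\in Y$ and $z\in E^\an$, the transformation law for non-archimedean theta functions under $Y$-translation expresses $f(z+u')/f(z)$ as a product of the constant $c(u')^{-1}$ and a section pulled back from $B$ via the character $\varPhi(u')$; taking the $q^*M$-norm and using the definitions of $c_\trop$ and $\val$, or equivalently specialising Proposition~\ref{prop:f_trop_properties}(iii) along the Gauss-point section $\sigma$ and extending by fibrewise homogeneity, one obtains
\[
\log\|f(z+u')\|_{q^*M} - \log\|f(z)\|_{q^*M} \;=\; \mathbf{z}_{u'}(\val(z)).
\]
On the other hand, \eqref{eq:cocycle_c_trop} gives $c_{\trop,\rr}(\val(z)+u') - c_{\trop,\rr}(\val(z)) = \mathbf{z}_{u'}(\val(z))$. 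Subtracting shows that $-c_{\trop,\rr}\circ\val + \log\|f(\cdot)\|_{q^*M}$ is $Y$-invariant on $E^\an\setminus |(p^*D)^\an|$, so the weighted metric descends, defining a continuous metric $\|\cdot\|'$ on $L^\an$ for which the asserted formula holds tautologically with $\|\cdot\|'$ in place of $\|\cdot\|_L$.

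For the second step, I would identify $\|\cdot\|'$ with $\|\cdot\|_L$ via the characterizing property of the canonical metric recalled in \cite[Example~3.7]{gu}. In the symmetric case this is $[n]$-equivariance: $[n]^*\|\cdot\|_L = \|\cdot\|_L^{\otimes n^2}$ under the canonical isomorphism $[n]^*L \isom L^{\otimes n^2}$. Multiplication by $n$ on $A^\an$ lifts to $[n]$ on $E^\an$ and descends via $\val$ to multiplication by $n$ on $X_\rr^*$. Each of the three ingredients defining $\|\cdot\|'$ transforms correctly: (a) $\|\cdot\|_M$ on $B^\an$ is already canonical, since $B$ extends to the abelian scheme $\bb$ over $R$; (b) the theta function attached to $[n]^*s$ is the pullback $[n]^*f$ up to the comparison of triples; (c) $c_{\trop,\rr}$ is a quadratic form, hence $c_{\trop,\rr}(n\nu)=n^2c_{\trop,\rr}(\nu)$. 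A direct check then yields the $[n]$-compatibility and hence $\|\cdot\|'=\|\cdot\|_L$. For arbitrary $L$, I would either decompose $L$ into symmetric and anti-symmetric parts, handling each in turn, or verify more uniformly that $\|\cdot\|'$ is a cubical metric and appeal to the uniqueness of cubical rigidified metrics on $L^\an$.

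\emph{Main obstacle.} The $Y$-invariance step is essentially bookkeeping built on Proposition~\ref{prop:f_trop_properties} and \eqref{eq:cocycle_c_trop}. The substantive content is in the second step: carrying out the $[n]$-compatibility (or cubical) check requires careful tracking of how the Raynaud triples for $[n]^*L$ and $L^{\otimes n^2}$ relate to the fixed triple $(M,\varPhi,c)$, so that the transformation of non-archimedean theta functions under these isogenies intertwines correctly with the quadratic scaling of $c_{\trop,\rr}$.
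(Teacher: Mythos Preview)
The paper does not supply its own proof of this proposition; it is attributed to \cite[Example~8.15]{gub_kun} with the remark that the statement ``follows from the discussion'' there though it is ``not stated there explicitly''. So there is no in-paper argument to compare against beyond that citation, and your sketch is an independent proof.

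Your outline is sound. The reformulation as the metric identity $q^*\|\cdot\|_M = \exp(c_{\trop,\rr}\circ\val)\cdot p^*\|\cdot\|_L$ on $E^\an$ is correct, and Step~1 is exactly the cocycle matching between the theta-function transformation law (as in \cite[Proposition~3.13]{frss}, cited after Proposition~\ref{prop:f_trop_properties}) and \eqref{eq:cocycle_c_trop}; the signs check out. One point to tighten in Step~2: $c_{\trop,\rr}$ is not a homogeneous quadratic form in general but a quadratic form plus a linear term (see \eqref{eq:c_trop2_bis}), so the assertion $c_{\trop,\rr}(n\nu)=n^2c_{\trop,\rr}(\nu)$ is false as stated, even for symmetric $L$, unless the linear part $l$ happens to vanish for the chosen triple. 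You have effectively anticipated this in your ``main obstacle'': the mismatch is absorbed by the change of triple that arises when comparing $[n]^*L$ with $L^{\otimes n^2}$ (cf.\ Remarks~\ref{rmk:triples for L} and~\ref{rmk:triples and f_trop}), and the net effect on the descended metric cancels. The cubical route you mention at the end is cleaner and avoids this bookkeeping altogether: $q^*\|\cdot\|_M$ is cubical because $\|\cdot\|_M$ is a model metric coming from the cubical extension of $M$ over the abelian scheme $\bb$, and $\exp(-c_{\trop,\rr}\circ\val)$ is cubical on the trivial bundle because $\val$ is a homomorphism and $c_{\trop,\rr}$ is a polynomial of degree at most two; their product therefore descends to a rigidified cubical metric on $L^\an$, which must be $\|\cdot\|_L$ by uniqueness.
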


\subsection{Normalization of tropical theta functions}

We now come to a key definition in this paper. 
\begin{definition} \label{def:normaltheta}
We define the \emph{normalization} of the tropicalized theta function $f_\trop$  to be the function
\begin{equation} \| f_\trop\| = f_\trop + c_{\trop,\rr}
\end{equation}
on $X_\rr^*$.  
\end{definition}

\begin{remark}
The map $\|f_\trop\| \colon X_\rr^* \to \rr$ is independent of the choice of triple for $L$. To see this, let $u \in X$ be as in Remark~\ref{rmk:triples for L}. The map $c_{\trop,\rr}$ changes by adding the global linear map given by $u \in X$. As explained in Remark~\ref{rmk:triples and f_trop}, the function $f_\trop$ changes by adding the global linear map given by $-u$. We see that the sum $f_\trop + c_{\trop,\rr}$ is left unchanged. 
\end{remark}

\begin{prop} \label{prop:properties_norm_f_trop} The following properties hold for the normalized tropicalized theta function $\| f_\trop\|$.
\begin{itemize}
\item[(i)] The function $\|f_\trop\|$ is $Y$-invariant.
\item[(ii)] The function $\|f_\trop\|$ is piecewise quadratic, in fact, can be written as a minimum of positive definite quadratic functions. 
\item[(iii)] Let $f_1, f_2$ be non-archimedean theta functions for rigidified ample line bundles $L_1, L_2$ associated to triples $(M_1,\varPhi_1,c_1)$ and $(M_2,\varPhi_2,c_2)$. Then we have
\[ \| (f_1 \otimes f_2)_\trop \| = \|f_{1,\trop}\| + \|f_{2,\trop}\| \]
as functions on $X_\rr^*$.
\item[(iv)] Let $\nu \in X^*$, $N \in \zz_{\ne 0}$, and suppose that $N \nu \in Y$. Then $\| f_\trop\|(\nu) \in \frac{1}{2N} \zz$.
\end{itemize}
\end{prop}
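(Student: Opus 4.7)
The plan is to dispatch the four items largely independently, leaning on three ingredients already at hand: the cocycle identity~\eqref{eq:cocycle_c_trop} for $c_{\trop,\rr}$, the cocycle identity in Proposition~\ref{prop:f_trop_properties}(iii) for $f_\trop$, and the Fourier formula~\eqref{eqn:explicit_f_trop} for $f_\trop$.

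For (i), my plan is simply to add the two cocycle identities: Proposition~\ref{prop:f_trop_properties}(iii) gives $f_\trop(\nu+u')=f_\trop(\nu)-\mathbf{z}_{u'}(\nu)$, while \eqref{eq:cocycle_c_trop} gives $c_{\trop,\rr}(\nu+u')=c_{\trop,\rr}(\nu)+\mathbf{z}_{u'}(\nu)$, so the cocycle terms cancel and $\|f_\trop\|$ is $Y$-invariant. For (ii), I would substitute \eqref{eqn:explicit_f_trop} into the definition of $\|f_\trop\|$ to rewrite it as a pointwise minimum
\[ \|f_\trop\|(\nu)=\min_{u\in X}\bigl\{\,-\log\|a_u(\xi)\|_{M\otimes E_u}+\langle u,\nu\rangle+c_{\trop,\rr}(\nu)\bigr\}. \]
Each summand is a quadratic function of $\nu$ whose homogeneous quadratic part is $\tfrac12\langle\varPhi_\rr(\nu),\nu\rangle$, which is positive definite by the remark following \eqref{eqn:c_trop_identity}. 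Since $f_\trop$ is already known to be continuous and piecewise affine, at most finitely many $u$ contribute on any compact set, so the minimum is locally finite and $\|f_\trop\|$ is piecewise quadratic.

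For (iii), the key observation to establish first is multiplicativity of triples: if $(M_i,\varPhi_i,c_i)$ is a triple for $L_i$ with theta function $f_i$, then $(M_1\otimes M_2,\varPhi_1+\varPhi_2,c_1\otimes c_2)$ is a triple for $L_1\otimes L_2$ with theta function $f_1\otimes f_2$. The compatibilities \eqref{eq:rel1}–\eqref{eq:rel2} follow from additivity of polarizations under tensor products and from bilinearity of the trivialization $t$. Because the model metric on a tensor product is the tensor product of the metrics, both the tropicalization $f_\trop$ (via its definition~\eqref{def:trop}) and the map $c_\trop$ (via~\eqref{eq:c_trop1}) are additive under this operation, hence so is their extension to $X_\rr^*$, and the conclusion follows.

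Part (iv) is the arithmetic core and is where I expect the most care. The strategy is first to observe from~\eqref{eqn:explicit_f_trop} that $f_\trop(X^*)\subset\zz$: for $\nu\in X^*$, each $\langle u,\nu\rangle$ with $u\in X$ is an integer, and the model-metric value $-\log\|a_u(\xi)\|_{M\otimes E_u}$ at the Gauss point of $\bb$ is the order along the special fiber, hence also an integer, so the locally attained minimum is integral. Next, using the explicit formula~\eqref{eq:c_trop2} and writing $b(\cdot,\varPhi(\cdot))=\langle\varPhi(\cdot),\cdot\rangle$, I would compute
\[ 2N\,c_{\trop,\rr}(\nu) \;=\; N\langle\varPhi_\rr(\nu),\nu\rangle+Nl_\rr(\nu) \;=\; \langle\varPhi(N\nu),\nu\rangle+l(N\nu). \]
Under the hypothesis $N\nu\in Y$, both $\varPhi(N\nu)\in X$ and $l(N\nu)\in\zz$ are well-defined integers pairing integrally against $\nu\in X^*$, so $c_{\trop,\rr}(\nu)\in\tfrac1{2N}\zz$. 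Combining with integrality of $f_\trop(\nu)$ yields $\|f_\trop\|(\nu)\in\tfrac1{2N}\zz$. The main subtlety I anticipate is the integrality assertion $-\log\|a_u(\xi)\|\in\zz$: this requires interpreting each Fourier coefficient $a_u$ as a rational section of the rigidified extension of $M\otimes E_u$ to the abelian scheme $\bb$, whose order at the generic point of $\bb_s$ is automatically integral — everything else is bookkeeping from \S\ref{subsec:rigidified}–\S\ref{sec:theta_trop}.
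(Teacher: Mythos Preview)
Your proposal is correct and follows essentially the same approach as the paper: (i) by cancelling the cocycle contributions, (ii) via the Fourier formula~\eqref{eqn:explicit_f_trop} plus positive definiteness of the quadratic part of $c_{\trop,\rr}$, (iii) by additivity under tensor product, and (iv) by integrality of $f_\trop$ on $X^*$ combined with the $\tfrac{1}{2N}$-integrality of $c_{\trop,\rr}$. One simplification: your worry about showing $-\log\|a_u(\xi)\|_{M\otimes E_u}\in\zz$ is unnecessary, since Proposition~\ref{prop:f_trop_properties}(ii) already asserts that on each domain of linearity $f_\trop(\nu)=\langle u,\nu\rangle+\ell$ with $u\in X$ and $\ell\in\zz$, which immediately gives $f_\trop(X^*)\subset\zz$ without any further analysis of the Gauss point.
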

\begin{proof} Property (i) follows by combining \eqref{eq:cocycle_c_trop} and Proposition~\ref{prop:f_trop_properties}(iii).
Property (ii) follows from equation~\eqref{eqn:explicit_f_trop} and the observations about $c_{\trop,\rr}$ made at the beginning of this section. Property (iii) is obvious. As to property (iv), it is clear from Proposition~\ref{prop:f_trop_properties}(ii) that $f_\trop(\nu) \in \zz$ for $\nu \in X^*$. From \eqref{eq:c_trop2_bis} it follows that $c_{\trop,\rr}(\nu) \in \frac{1}{2N}\zz$ if $\nu \in X^*$ and $0 \ne N\nu \in Y$.
\end{proof}

Consider the real torus $\varSigma = X_\rr^*/Y$ as in \S\ref{subsec:trop_maps}. The section $\sigma  \colon X_\rr^* \to E^\an$ induces a section $\iota \colon \varSigma \to A^\an$ of the map $\overline{\val} \colon A^\an \to \varSigma$. In the following we will usually think of the real torus $\varSigma$ as a subset of $A^\an$ via the embedding $\iota \colon \varSigma \to A^\an$; it is then called the \emph{canonical skeleton} of $A^\an$.

By Proposition~\ref{prop:properties_norm_f_trop}(i), we may consider $\|f_\trop\|$ as a continuous function on the canonical skeleton $\varSigma = X_\rr^*/Y$. 
The following result follows immediately from Proposition~\ref{prop:ex_8.15} and \eqref{def:trop}.
\begin{prop} \label{prop:f_trop_log_s} The identity of continuous functions
\[ \|f_\trop\| = -\log \|s\|_L \]
holds on the canonical skeleton $\varSigma=X_\rr^*/Y$.
\end{prop}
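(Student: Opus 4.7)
The proof is essentially a bookkeeping exercise assembling Proposition~\ref{prop:ex_8.15}, Definition~\ref{def:normaltheta}, and the fact that $\sigma$ is a section of $\val$. The plan is to check the pointwise identity on $X_\rr^*$, verify it is compatible with the $Y$-quotient, and then identify the two sides via the embedding $\iota \colon \varSigma \to A^\an$.

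First I would fix $\nu \in X_\rr^*$ and set $z = \sigma(\nu) \in E^\an$ and $x = p(z) = \iota(\bar\nu) \in A^\an$, where $\bar\nu$ denotes the image of $\nu$ in $\varSigma$. Before invoking Proposition~\ref{prop:ex_8.15} I need to know that $x \notin |D^\an|$. This follows from the definition $f_\trop(\nu) = -\log \|f(\sigma(\nu))\|_{q^*M}$ in \eqref{def:trop} together with Proposition~\ref{prop:f_trop_properties}(i), which guarantees that $f_\trop$ is finite (indeed continuous) at every $\nu$; hence $f(\sigma(\nu)) \neq 0$, so $\sigma(\nu) \notin |q^*D^\an|$, and therefore $x = p(\sigma(\nu)) \notin |D^\an|$.

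Next, Proposition~\ref{prop:ex_8.15} applied to this choice of $x,z$ yields
\[
-\log \|s(x)\|_L \; = \; c_{\trop,\rr}\bigl(\val(\sigma(\nu))\bigr) \, - \, \log \|f(\sigma(\nu))\|_{q^*M} .
\]
Since $\sigma$ is a section of $\val$ we have $\val(\sigma(\nu)) = \nu$, and by \eqref{def:trop} the second term equals $f_\trop(\nu)$. Putting these together and using Definition~\ref{def:normaltheta} gives
\[
-\log \|s(\iota(\bar\nu))\|_L \; = \; c_{\trop,\rr}(\nu) + f_\trop(\nu) \; = \; \|f_\trop\|(\nu) .
\]

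Finally I would observe that the right-hand side descends to a function on $\varSigma$ by Proposition~\ref{prop:properties_norm_f_trop}(i), while the left-hand side depends only on $\bar\nu$ since $\iota(\bar\nu)$ does; hence the displayed equality is an identity of continuous functions on $\varSigma$, as required. There is no real obstacle here: every ingredient has been set up in the previous sections, and the only mild point to be careful about is ensuring that the canonical skeleton $\iota(\varSigma)$ avoids $|D^\an|$, which is precisely ensured by the finiteness of $f_\trop$.
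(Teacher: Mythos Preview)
Your proof is correct and follows exactly the approach the paper indicates: the paper simply states that the result ``follows immediately from Proposition~\ref{prop:ex_8.15} and \eqref{def:trop},'' and you have carefully unpacked precisely that implication. Your additional care in verifying that $\iota(\varSigma)$ avoids $|D^\an|$ (via the finiteness of $f_\trop$) is a nice detail the paper leaves implicit.
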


\section{The case of a principal polarization} \label{sec:pp}

We continue to work with a rigidified ample line bundle $L$ on $A$, as well as a triple $(M,\varPhi,c)$ for $L$. 
We assume in this section that $L$ defines a \emph{principal polarization}. Then, the line bundle $M$ defines a principal polarization on the abelian variety $B$, and the map $\varPhi \colon Y \to X$ is an isomorphism. It is customary to identify $Y$ and $X$ using this isomorphism.

We denote by $[\cdot,\cdot]$ the inner product on $X_\rr^*$ that is induced from the symmetric bilinear form $b$ on $X \times X$, and consider the continuous map $\varPsi \colon X_\rr^* \to \rr$ given by
\begin{equation} \label{eq:tropical_Riemann}
\varPsi(\nu) = \min_{u' \in Y} \left\{ \frac{1}{2}[u',u'] + [u',\nu] \right\} \, , \quad \nu \in X_\rr^* \, .
\end{equation}
This is a concave piecewise integral affine function with the property that for all $u' \in Y$ and for all $\nu \in X_\rr^*$ we have 
\begin{equation} \label{eqn:functional_eqn} \varPsi(\nu ) = \varPsi(\nu + u' ) + \mathbf{z}_{u'}(\nu) \, , 
\end{equation}
where  $\mathbf{z} \colon Y \times X_\rr^* \to \rr$ is the $1$-cocycle for the $Y$-action on $X_\rr^*$ given by 
\begin{equation} \label{eqn:trop_cocycle} \mathbf{z}_{u'}(\nu) = \frac{1}{2}[u',u'] + [u',\nu]  \, , \quad u' \in Y \, , \quad \nu \in X_\rr^* \, . 
\end{equation}
Based on the analogies with the classical complex analytic Riemann theta function one calls $\varPsi$ the \emph{tropical Riemann theta function} associated to the data $(X,Y,\varPhi,b)$. 
The maximal domains of linearity of $\varPsi$ are the Voronoi regions of the lattice $Y$ with respect to the inner product $[\cdot,\cdot]$.

Let $s \in H^0(A,L)$ be a non-zero global section of $L$ and let $f \in H^0(E^\an,q^*M^\an)$ be the non-archimedean theta function associated to~$s$ and the given triple $(M,\varPhi,c)$. 
\begin{prop}{\cite[Theorem~4.9]{frss}} \label{prop:tropical_Riem} Let $f_\trop$ be the tropicalization of $f$ as defined in \eqref{def:trop}. Then there exist a unique $k \in X_\qq^*$ and a unique $r' \in \qq$ such that $f_\trop = \varPsi \circ \t_k + r'$. Here $\t_k \colon X_\rr^* \to X_\rr^*$ denotes translation by $k$.
\end{prop}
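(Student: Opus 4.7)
The approach is to first pin down $k$ by matching the quasi-periodicity cocycles of $f_\trop$ and $\varPsi \circ \t_k$ under the $Y$-action, and then to use the concave Legendre transform to reduce the identification to a scalar recursion with a unique solution up to an additive constant.

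\emph{Step 1 (matching cocycles).} By Proposition~\ref{prop:f_trop_properties}(iii) combined with \eqref{eq:cocycle}, the tropicalization satisfies $f_\trop(\nu + u') = f_\trop(\nu) - \mathbf{z}_{u'}(\nu)$ with $\mathbf{z}_{u'}(\nu) = \tfrac{1}{2}[u',u'] + \tfrac{1}{2}l(u') + [u',\nu]$, using \eqref{eq:c_trop2} and the principal-polarization identifications $Y \isom X$ via $\varPhi$ and $\langle \varPhi(u'),\nu\rangle = [u',\nu]$. Meanwhile $\varPsi$ satisfies the analogous identity with cocycle $\tfrac{1}{2}[u',u'] + [u',\nu]$ by \eqref{eqn:functional_eqn}, so a short computation shows that $\varPsi \circ \t_k$ has cocycle $\tfrac{1}{2}[u',u'] + [u',\nu] + [u',k]$. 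Matching these cocycles yields the linear system $[u',k] = \tfrac{1}{2}l(u')$ for all $u' \in Y$; positive-definiteness of $[\cdot,\cdot]$ together with the fact that $Y$ spans $X_\rr^*$ over $\rr$ produces a unique solution $k \in X_\rr^*$, and integrality of $l \in Y^*$ then forces $k \in X_\qq^*$.

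\emph{Step 2 (the difference is constant).} With this choice of $k$, write $F_1 = f_\trop$ and $F_2 = \varPsi \circ \t_k$. Both are continuous, concave, piecewise integral affine, and satisfy the same cocycle. Both also admit $\min$-representations $F_i(\nu) = \min_{u \in X}(a_u^{(i)} + \langle u,\nu\rangle)$: for $F_1$ by \eqref{eqn:explicit_f_trop}, and for $F_2$ by re-parameterizing \eqref{eq:tropical_Riemann} via the isomorphism $\varPhi$, which gives $a_u^{(2)} = \tfrac{1}{2}[\varPhi^{-1}(u), \varPhi^{-1}(u)] + \langle u,k\rangle$. The coefficients $a_u^{(i)}$ are captured intrinsically by the concave Legendre transform $a_u^{(i)} = \inf_{\nu \in X_\rr^*}(F_i(\nu) - \langle u,\nu\rangle)$. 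Substituting $\nu \mapsto \nu - u'$ in this infimum and applying the common cocycle produces the identical recursion
\[
a_{u+\varPhi(u')}^{(i)} - a_u^{(i)} = \langle u, u'\rangle + \tfrac{1}{2}[u',u'] + \tfrac{1}{2}l(u'), \qquad u \in X,\ u' \in Y,
\]
for both $i = 1, 2$. Surjectivity of $\varPhi \colon Y \to X$ makes this recursion transitive on $X$, so $a_u^{(1)} - a_u^{(2)}$ is independent of $u$. Setting $r' := a_0^{(1)} - a_0^{(2)}$ then gives $f_\trop = \varPsi \circ \t_k + r'$ pointwise on $X_\rr^*$, since the constant $r'$ pulls out of both $\min$-expressions uniformly.

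\emph{Uniqueness and rationality.} Uniqueness of $k$ is immediate from non-degeneracy of $[\cdot,\cdot]$, and uniqueness of $r'$ follows at once. For rationality, $f_\trop(0) \in \zz$ by Proposition~\ref{prop:f_trop_properties}(ii) applied at the lattice point $0$, while $\varPsi(k) \in \qq$ as a minimum of affine forms with rational coefficients evaluated at the rational point $k$, so $r' = f_\trop(0) - \varPsi(k) \in \qq$. The main technical hurdle is Step~2: one must verify that the Newton set $\{u \in X : a_u^{(i)} < \infty\}$ equals all of $X$ for both $i$. Non-emptiness follows from $F_i \not\equiv -\infty$, and the recursion shows the set is invariant under $u \mapsto u + \varPhi(u')$; since $\varPhi$ is surjective, this forces the set to be all of $X$, so the recursion does in fact determine every $a_u^{(i)}$ from $a_0^{(i)}$.
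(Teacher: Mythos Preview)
The paper does not prove this proposition itself; it is quoted from \cite[Theorem~4.9]{frss}, so there is no in-paper argument to compare against. Your two-step strategy---match the cocycles to determine $k$, then use a Legendre-type argument to show the remaining difference is constant---is sound and would give a self-contained proof. Two points need repair, however.

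First, the Legendre formula is written with the wrong extremum. Since each $F_i$ is concave and $F_i(\nu)-\langle u,\nu\rangle \to -\infty$ along suitable rays, your $\inf_\nu(F_i(\nu)-\langle u,\nu\rangle)$ equals $-\infty$ for every $u$. The correct recovery is $a_u^{(i)}=\sup_\nu\bigl(F_i(\nu)-\langle u,\nu\rangle\bigr)$. With $\sup$ in place, the substitution $\nu\mapsto\nu-u'$ together with the common cocycle does yield the recursion you state, and the rest of Step~2 goes through.

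Second, the claim that this transform ``captures'' the explicit Fourier coefficients $-\log\|a_u(\xi)\|_{M\otimes E_u}$ of \eqref{eqn:explicit_f_trop} is unjustified: a term in a $\min$-expression that is never active is invisible to the Legendre transform, and nothing rules out redundant Fourier coefficients a priori. Your argument does not actually need this identification. Simply \emph{define} $a_u^{(i)}:=\sup_\nu(F_i(\nu)-\langle u,\nu\rangle)$, derive the recursion directly from the cocycle (your computation already does this), and then use concave biconjugation plus the fact that all slopes of $F_i$ lie in $X$ (Proposition~\ref{prop:f_trop_properties}(ii)) to get $F_i(\nu)=\min_{u\in X}(a_u^{(i)}+\langle u,\nu\rangle)$. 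The explicit formulas opening Step~2 can then be dropped, and your finiteness/Newton-set discussion becomes exactly the verification that $a_0^{(i)}<\infty$, which follows since $F_i$ is bounded above.
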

We call $k \in X_\qq^*$ as in Proposition~\ref{prop:tropical_Riem} the \emph{tropical theta characteristic} of $f$. From \eqref{eqn:functional_eqn} and \eqref{eqn:trop_cocycle} combined with Proposition~\ref{prop:tropical_Riem} we find that
\begin{equation} \label{eqn:c_trop}
c_\trop(u') = \frac{1}{2}[u',u'] + [u',k] \, , \quad u' \in Y \, .  
\end{equation} 
By comparison with \eqref{eq:c_trop2} we see that $2k \in X^*$. 
\begin{remark}
By Remark~\ref{rmk:triples for L}, a change of triple for $L$ leads to a change of $c_\trop$ by the linear map $b(\cdot,u)$ for some $u \in X$.  It follows from \eqref{eqn:c_trop} that the tropical theta characteristic $k$ changes into $k+u'$ for some $u' \in Y$. In particular, the class $\kappa \in X_\qq^*/Y$ of $k$ mod $Y$ is independent of the choice of triple for $L$. We call the element $\kappa \in \varSigma$ the \emph{tropical theta characteristic} of $L$. Note that $2\kappa \in X^*/Y$. Moreover, $\kappa$ is a $2$-torsion point of $\varSigma$ if $L$ is \emph{symmetric}.
\end{remark}

We define the \emph{normalized tropical Riemann theta function} as the continuous piecewise quadratic map $\|\varPsi\| \colon X_\rr^* \to \rr$ given by
\begin{equation} \label{eq:normalized_tropical_Riemann}
\|\varPsi\|(\nu) = \varPsi(\nu) + \frac{1}{2} [\nu,\nu] \, , \quad \nu \in X_\rr^* \, . 
\end{equation}
We may simply write
\begin{equation} \label{eqn:norm_theta_simple}
\|\varPsi\|(\nu) = \frac{1}{2} \min_{u' \in Y} \, [\nu + u',\nu+u'] \, ,
\end{equation}
from which it is immediate that $\|\varPsi\|$ descends along the lattice $Y$ to give a map $\|\varPsi\| \colon \varSigma \to \rr$.

The following result establishes Theorem~\ref{thm:B2} from the introduction.
\begin{thm} \label{thm:B_bis} Assume that $L$ defines a principal polarization and let $f$ be a non-archimedean theta function for~$L$. Let $\kappa \in \varSigma$ be the tropical theta characteristic of $L$. There is a unique $r \in \qq$ such that the equality 
\[\|f_\trop\| = \|\varPsi\| \circ \t_\kappa + r \] 
holds on $\varSigma$. Here $\t_\kappa \colon \varSigma \to \varSigma$ denotes translation by $\kappa$.
\end{thm}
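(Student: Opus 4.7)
My plan is to combine Proposition~\ref{prop:tropical_Riem} with the explicit formula for $c_{\trop,\rr}$ and the definition of $\|\varPsi\|$, show that the two functions in question differ by a rational constant on $X_\rr^*$, and then descend the identity to $\varSigma$. The main content has already been absorbed into Proposition~\ref{prop:tropical_Riem}, so no step should present a genuine obstacle; what remains is essentially completing the square and checking $Y$-invariance.

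First, I invoke Proposition~\ref{prop:tropical_Riem} to obtain the unique $k \in X_\qq^*$ and $r' \in \qq$ with $f_\trop = \varPsi \circ \t_k + r'$ on $X_\rr^*$; by construction, the image of $k$ in $\varSigma$ is the tropical theta characteristic $\kappa$ of $L$, so $\t_\kappa$ on $\varSigma$ is induced from $\t_k$ on $X_\rr^*$. Next, using Definition~\ref{def:normaltheta} together with the identity $c_\trop(u') = \frac{1}{2}[u',u'] + [u',k]$ from \eqref{eqn:c_trop}, the uniqueness of the quadratic extension forces
\[ c_{\trop,\rr}(\nu) = \frac{1}{2}[\nu,\nu] + [\nu,k] \quad \text{for all } \nu \in X_\rr^* \, . \]
Combining these, I obtain
\[ \|f_\trop\|(\nu) = \varPsi(\nu+k) + r' + \frac{1}{2}[\nu,\nu] + [\nu,k] \, , \]
while \eqref{eq:normalized_tropical_Riemann} expands
\[ (\|\varPsi\| \circ \t_k)(\nu) = \varPsi(\nu+k) + \frac{1}{2}[\nu+k,\nu+k] = \varPsi(\nu+k) + \frac{1}{2}[\nu,\nu] + [\nu,k] + \frac{1}{2}[k,k] \, . \]
Subtracting shows that the difference is the constant $r' - \frac{1}{2}[k,k]$, which I set to be $r$. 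This $r$ lies in $\qq$, because $r' \in \qq$, $k \in X_\qq^*$, and $[\cdot,\cdot]$ takes rational values on $X_\qq^* \times X_\qq^*$.

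Finally, I descend the equality $\|f_\trop\| = \|\varPsi\| \circ \t_k + r$ to $\varSigma = X_\rr^*/Y$. Both $\|f_\trop\|$ (by Proposition~\ref{prop:properties_norm_f_trop}(i)) and $\|\varPsi\| \circ \t_k$ (via the presentation in \eqref{eqn:norm_theta_simple}, which manifestly exhibits $Y$-invariance of $\|\varPsi\|$) are $Y$-invariant on $X_\rr^*$, so the identity descends to the claimed equality $\|f_\trop\| = \|\varPsi\| \circ \t_\kappa + r$ on $\varSigma$. Uniqueness of $r$ is immediate: it is pinned down by evaluating the difference of the two sides at any single point of $\varSigma$.
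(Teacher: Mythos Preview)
Your proof is correct and follows essentially the same route as the paper: invoke Proposition~\ref{prop:tropical_Riem}, use \eqref{eqn:c_trop} to write $c_{\trop,\rr}(\nu) = \tfrac{1}{2}[\nu,\nu] + [\nu,k]$, and complete the square to arrive at $r = r' - \tfrac{1}{2}[k,k]$. The additional remarks you make about rationality of $r$ and about descent to $\varSigma$ are sound but not strictly needed beyond what the paper already records.
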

\begin{proof} Let $k \in X_\rr^*$ and $r' \in \qq$ be the invariants of $f$ as in Proposition~\ref{prop:tropical_Riem}. We compute, using \eqref{eqn:c_trop},
\begin{equation} \begin{split} \| f_\trop \|(\nu) & = f_\trop(\nu) + c_{\trop,\rr}(\nu) \\
  & = \varPsi(\nu +k) + \frac{1}{2}[\nu,\nu] + [\nu,k] + r' \\
  & = \|\varPsi\|(\nu + k ) - \frac{1}{2}[k,k] + r'
\end{split} 
\end{equation}
for $\nu \in X_\rr^*$. The required equality follows, with $r = - \frac{1}{2}[k,k] + r'$. 
\end{proof}

\section{Tautological models} \label{sec:taut_Mumford}

We continue to work with a rigidified ample line bundle $L$ on $A$, as well as a triple $(M,\varPhi,c)$ for $L$. We will continue to use the non-archimedean uniformization of $A^\an$, as in \eqref{eq:crosses}. The aim of this section is to explain that the tropicalizations of non-archimedean theta functions for $L$, as discussed in \S\ref{sec:theta_trop}, naturally give rise to certain Mumford models of the abelian variety $A$ and the line bundle $L$ over the ring of integers $R$.

The original source of Mumford's construction of models of the pair $(A,L)$ over~$R$ is \cite{mu}. The construction is explained in great detail in \cite[Chapter~III]{fc} and in \cite[Chapter~3]{ku}. These sources mostly work in the formal algebraic category. In \cite[Chapter~6]{bl} a discussion of Mumford's construction is given in terms of rigid analytic geometry, and in \cite[Chapter~4]{gu} in terms of Berkovich analytic geometry. 

\subsection{Mumford's construction}

We briefly describe the input and output of Mumford's construction, without saying too much about the construction itself.

Recall from \eqref{eq:cocycle} the $1$-cocycle
\begin{equation} 
\mathbf{z} \colon Y \times X_\rr^* \to \rr \, , \quad
\mathbf{z}_{u'}(\nu) = c_\trop(u') + \langle \varPhi(u'),\nu \rangle \, , \quad u' \in Y \, , \, \nu \in X_\rr^*  
\end{equation}
attached to the triple $(M,\varPhi,c)$.  Consider a pair $(\cc,\phi)$ consisting of:  
\begin{itemize}
\item[(a)] a $Y$-periodic rational polytopal decomposition  $\cc$ of the real vector space $X_\rr^*$, and 
\item[(b)] a convex continuous function $\phi \colon X_\rr^* \to \rr$, 
\end{itemize}
satisfying the following two conditions:
\begin{itemize}
\item[(i)] $\phi$ is piecewise integral affine with respect to $\cc$: for each $\Delta \in \cc$ there exist $u \in X$ and $\ell \in \zz$ with $\phi(\nu) = \langle u,\nu \rangle + \ell$ for $\nu \in \Delta$; 
\item[(ii)] $\phi$ has the given $1$-cocycle $\mathbf{z}$ as tropical automorphy factor: for all $u' \in Y$ and for all $\nu \in X_\rr^*$ we have $\phi(\nu + u') = \phi(\nu) + \mathbf{z}_{u'}(\nu)$.
\end{itemize}
We call $\phi$ an \emph{admissible polarization function} for $L$ with respect to  $\cc$. 
Mumford's construction attaches to the pair $(\cc,\phi)$ a pair $(\pp,\ll)$ consisting of:
\begin{itemize}
\item a projective flat model $\pp$ of the abelian variety $A$ over $R$, and
\item a line bundle $\ll$ on $\pp$ extending the line bundle $L$ on $A$. 
\end{itemize}
Let $G$ denote the connected component of identity of the N\'eron model of $A$ over $R$.
Mumford's construction starts from a suitable \emph{relatively complete model} for the degeneration data determined by the semi-abelian group scheme $G$ over $R$ and the triple $(M,\varPhi,c)$. 
Consider the short exact sequence 
\begin{equation} 
 1 \to \tt \to \ee \to \bb \to 0 
\end{equation}
of semi-abelian schemes over $R$ given in \eqref{eqn:semiab}.

Mumford's relatively complete model is obtained as a contraction product of the semi-abelian scheme $\ee$ with a torus embedding of the toric part $\tt$ of $\ee$, based on the rational cone decomposition of $(X_\rr^* \times \rr_{> 0}) \cup \{0\}$ formed by the cones over the polytopes in $\cc$, seen as subsets of $X_\rr^* \times \{1\}$.

We list a few properties of the Mumford model $(\pp,\ll)$ associated to $(\cc,\phi)$, all of which can be found in \cite[Chapter~3]{ku}. We use a subscript $(\cdot)_0$ to denote special fibers:
\begin{itemize}
\item the scheme $\pp$ is reduced and irreducible;
\item the scheme $\pp$ naturally contains the semi-abelian group scheme $G$ as an open subscheme;
\item the action of $G$ on itself by translations extends into an action of $G$ on $\pp$;
\item the orbits of the $G_0$-action on $\pp_0$ are semi-abelian varieties with abelian part $\bb_0$, and define a stratification of $\pp_0$;
\item the strata of $\pp_0$ are in bijective order reversing correspondence with the open faces of the polytopal decomposition $\cc/Y$ of $\varSigma=X_\rr^*/Y$;
\item the line bundle $\ll$ is relatively ample if the admissible polarization function $\phi$ is {\em strongly polyhedral} with respect to $\cc$, in the sense that the maximal-dimensional polytopes in $\cc$ are the maximal subsets of $X_\rr^*$ where $\phi$ is affine.
\end{itemize}
Let $(\pp,\ll)$ be the Mumford model associated to a pair $(\cc,\phi)$ as above.
Let $\|\cdot\|_\ll$ denote the model metric on $L$ determined by the line bundle $\ll$ on the projective flat integral model $\pp$ of $A$. 
We have the following fundamental result due to Gubler. 

\begin{prop}{\cite[Proposition~4.11]{gu}} \label{lem:gubler} The admissible polarization  function $\phi \colon X_\rr^* \to \rr$ and the model metric $\|\cdot\|_\ll$ satisfy the equality
\[ \phi \circ \val = - \log \circ \left( p^* \|\cdot\|_\ll / q^* \|\cdot \|_M \right) \, , \]
where the quotient of the metrics on $p^*L^\an = q^*M^\an$ is evaluated at any non-zero local section.
\end{prop}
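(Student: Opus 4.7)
The plan is to verify the equality locally, using the explicit chart description of the Mumford model $(\pp, \ll)$ determined by the pair $(\cc, \phi)$. Both sides are continuous $\rr$-valued functions on $E^\an$, and the ratio of norms on the right is independent of the chosen non-vanishing local section; it therefore suffices to check the equality on the preimage under $\val$ of the interior of each maximal polytope of $\cc$, and to extend by continuity along the faces.

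Fix a maximal polytope $\Delta \in \cc$ and write $\phi|_\Delta(\nu) = \langle u, \nu \rangle + \ell$ with $u \in X$ and $\ell \in \zz$, as permitted by the integer-affine hypothesis on $\phi$. Mumford's construction produces an affine open chart $U_\Delta \subset \pp$, built as a contraction product from the cone over $\Delta \times \{1\}$ inside $X_\rr^* \times \rr_{\ge 0}$, together with a distinguished local generator $e_\Delta$ of $\ll$ on $U_\Delta$. The transition functions between $U_\Delta$ and a neighbouring chart $U_{\Delta'}$ involve the monomial $\chi^{u'-u}\pi^{\ell'-\ell}$, recording the difference of the two affine forms that $\phi$ takes on the adjacent polytopes. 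Unwinding the construction, under the uniformization $p$ and the identification $p^*L^\an = q^*M^\an$ from \S\ref{subsec:rigidified}, the analytification of $e_\Delta$ corresponds to the local section $\chi^{-u} \cdot \pi^{-\ell} \cdot q^*s_M$, where $s_M$ is a local generator of the canonical extension $\mm$ of $M$ to $\bb$; the sign of the exponents is dictated by the requirement that $\ll$ has a smaller model metric precisely where $\phi$ is larger.

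With this dictionary in hand, the computation reduces to a direct match of normalizations. Let $\tau$ be a non-vanishing local section of $p^*L^\an = q^*M^\an$ on $\val^{-1}(\mathrm{int}\,\Delta)$, and write $\tau = g \cdot e_\Delta = g \cdot \chi^{-u}\pi^{-\ell} \cdot q^*s_M$ for a scalar function $g$. Since $e_\Delta$ and $s_M$ generate the respective integral models $\ll$ and $\mm$, their model-metric norms equal $1$. Therefore $p^*\|\tau\|_\ll(z) = |g(z)|$ and $q^*\|\tau\|_M(z) = |g(z)|\cdot|\chi^{-u}(z)|\cdot|\pi|^{-\ell}$, and using $-\log|\chi^u(z)| = \langle u, \val(z)\rangle$ from \eqref{def_trop} together with $-\log|\pi| = 1$ from the normalization of the absolute value, one obtains
\[ -\log\bigl(p^*\|\tau\|_\ll(z)\big/q^*\|\tau\|_M(z)\bigr) = \langle u, \val(z) \rangle + \ell = \phi(\val(z)), \]
as required.

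The main obstacle is the combinatorial bookkeeping in Mumford's construction, namely extracting from the pair $(\cc, \phi)$ the precise assertion that a local generator of $\ll$ on $U_\Delta$ corresponds to $\chi^{-u}\pi^{-\ell}\cdot q^*s_M$ under the uniformization identification. Here both hypotheses on $\phi$ enter essentially: the integer-affine condition ensures that $\chi^{-u}\pi^{-\ell}$ is a legitimate regular function on $U_\Delta$, while the cocycle condition ensures that the local line-bundle data assemble $Y$-equivariantly, so that the relatively complete model upstairs actually descends to the genuine model $\ll$ on $\pp$.
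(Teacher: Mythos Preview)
The paper does not prove this proposition: it is imported verbatim as Gubler's result \cite[Proposition~4.11]{gu}, with no argument supplied here. There is therefore nothing in the present paper to compare your proposal against.

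That said, your sketch is in the spirit of how Gubler actually proves the result. Two points are worth flagging. First, you correctly identify the crux --- that a local generator of $\ll$ on $U_\Delta$ pulls back to $\chi^{-u}\pi^{-\ell}\cdot q^*s_M$ --- but you do not establish it; this is precisely the content one must extract from the relatively complete model, and asserting it is close to assuming what is to be proved. Second, the line ``$-\log|\chi^u(z)| = \langle u, \val(z)\rangle$ from \eqref{def_trop}'' is not quite right: the character $\chi^u$ is a function on $T^\an$, not on $E^\an$, and \eqref{def_trop} involves $e_u$ and the model metric $\|\cdot\|_{E_u}$ instead. Over a trivializing open of $\bb$ the two agree and your computation goes through, but as written the argument conflates the torus and the semi-abelian variety.
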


\subsection{Tautological models}

Let $s \in H^0(A,L)$ be a non-zero global section, fixed from now on, and let $f$ be the non-archimedean theta function for $L$ determined by the section $s$ and the given triple $(M,\varPhi,c)$. 

Put $\phi = -f_\trop$. It follows from Proposition~\ref{prop:f_trop_properties} that $\phi \colon X_\rr^* \to \rr$ is convex continuous and satisfies the properties (i) and (ii) above.  Let $\cc$ be the set of maximal domains of linearity of $\phi$. Then, by construction, the function $\phi$ is a strongly polyhedral admissible polarization function for $L$ with respect to $\cc$. 

\begin{example} \label{exa:principal_pol_phi} Assume for the moment that $L$ defines a principal polarization. We resume the notations from \S\ref{sec:pp}. Let $k \in X_\qq^*$ be the theta characteristic of $f$. We obtain from Proposition~\ref{prop:tropical_Riem} that $\phi = - \varPsi \circ \t_k$, up to an additive constant, where $\varPsi \colon X_\rr^* \to \rr$ is the tropical Riemann theta function determined by the inner product $[\cdot,\cdot]$ on $X_\rr^*$. In particular the polytopal decomposition $\cc$ equals the shift by $k$ of the Voronoi tiling of the Euclidean space $X_\rr^*$ induced by the lattice $Y \subset X_\rr^*$.
\end{example}

\begin{definition} \label{def:taut_model} The \emph{tautological model} of the pair $(A,L)$ with respect to the given global section~$s$ is the model $(\pp,\ll)$ of the pair $(A,L)$ given by applying Mumford's construction to the pair $(\cc, \phi)$ where $\phi = -f_\trop$ and $\cc$ is the set of maximal domains of linearity of $\phi$ as above. Note that the line bundle $\ll$ on $\pp$ is relatively ample.
\end{definition}

\begin{remark} Up to isomorphisms, the tautological model of the pair $(A,L)$ is independent of the choice of triple for $L$. This follows from Remark~\ref{rmk:triples and f_trop}, which states that the effect of a change of triple is that $f_\trop$ changes by adding a global linear map. In particular the polytopal decomposition $\cc$ is unchanged. From the way the model $(\pp,\ll)$ is constructed in \cite[Chapter~3]{ku}, using the technology of torus embeddings over $R$, we see that the model $\pp$ is unchanged, and the line bundle $\ll$ changes only by a \emph{trivial} line bundle. 
\end{remark}

\begin{definition} \label{def:semi-stable}
Let $(\pp,\ll)$ be the tautological model of the pair $(A,L)$ over $R$ with respect to the given global section~$s$. We call the model $(\pp,\ll)$ \emph{semistable} if the special fiber $\pp_0$ is \emph{reduced}. The discussion in \cite[\S4]{ku} gives the following statements:
\begin{itemize}
\item[(i)] when $(\pp,\ll)$ is semistable over $R$, the formation of the tautological model of $(A,L)$ over finite extensions of $R$ is compatible with base change;
\item[(ii)] there exists a finite extension $R' \supset R$ such that the tautological model of $(A,L)$ over $R'$ is semistable.
\end{itemize}
\end{definition}

\subsection{The tautological model metric}

Let $(\pp,\ll)$ be the tautological model of the pair $(A,L)$ with respect to the section~$s$. 
The purpose of this section is to present some properties of the model metric $\|\cdot\|_\ll$  on $L^\an$ determined by the model $\ll$ of the line bundle $L$. 

Let $\|\cdot\|_L$ denote the canonical metric on $L^\an$, see \S\ref{subsec:canonical}.
The first result describes the quotient $\|\cdot\|_\ll/\|\cdot\|_L$ as a function on $A^\an$. 
\begin{prop}  \label{thm:quotient} Let $f$ be a non-archimedean theta function for the global section~$s$, and let $\|f_\trop\| \colon \varSigma \to \rr$ be the normalized tropicalization of $f$. Then the equality
\begin{equation} \label{eq:metric_vs_ftrop}
\log \circ \left( \|\cdot\|_\ll/\|\cdot\|_L \right) = \| f_\trop \| \circ \overline{\val} 
\end{equation}
holds on $A^\an$, where the quotient of the metrics is evaluated at any non-zero local section.
\end{prop}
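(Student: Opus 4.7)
The plan is to pull the desired identity back along the uniformization map $p \colon E^\an \to A^\an$, reduce it to an equation of real-valued functions on $E^\an$, and then descend using the $Y$-invariance of $\|f_\trop\|$. Both sides live on $A^\an$ outside the locus where a section vanishes, but since the metric ratio is a continuous function (independent of any section), the identity extends to all of $A^\an$ once established.

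First, I would reinterpret Proposition~\ref{prop:ex_8.15} as an identity between the two model metrics on the line bundle $p^*L^\an = q^*M^\an$ over $E^\an$. Since under this identification the non-archimedean theta function $f$ corresponds to $p^*s$, the formula in Proposition~\ref{prop:ex_8.15} can be rearranged as
\[
\log\bigl(q^*\|\cdot\|_M/p^*\|\cdot\|_L\bigr) \;=\; c_{\trop,\rr}\circ\val
\]
on $E^\an$. Next, I would apply Proposition~\ref{lem:gubler} to the tautological model $(\pp,\ll)$, whose defining admissible polarization function is $\phi=-f_\trop$ by Definition~\ref{def:taut_model}. This yields
\[
\log\bigl(p^*\|\cdot\|_\ll/q^*\|\cdot\|_M\bigr) \;=\; f_\trop\circ\val
\]
on $E^\an$.

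Adding the two displays and invoking Definition~\ref{def:normaltheta} gives
\[
\log\bigl(p^*\|\cdot\|_\ll/p^*\|\cdot\|_L\bigr) \;=\; (f_\trop + c_{\trop,\rr})\circ\val \;=\; \|f_\trop\|\circ\val.
\]
Finally, by Proposition~\ref{prop:properties_norm_f_trop}(i) the function $\|f_\trop\|$ is $Y$-invariant, so by the defining diagram \eqref{def_tau} of $\overline{\val}$ we have $\|f_\trop\|\circ\val = (\|f_\trop\|\circ\overline{\val})\circ p$. Surjectivity of $p$ then lets me conclude the claimed equality on $A^\an$.

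The step that requires the most care is the bookkeeping: tracking the sign conventions (the admissible polarization function is convex while $f_\trop$ is concave, hence the sign flip $\phi=-f_\trop$), correctly identifying $f$ with $p^*s$ via the isomorphism $p^*L^\an = q^*M^\an$ of rigidified line bundles from \S\ref{subsec:rigidified}, and confirming that the identity of functions holds on all of $E^\an$ (and hence $A^\an$) rather than merely off the zero locus of the section. The genuine content is already packaged into Gubler's formula (Proposition~\ref{lem:gubler}) and the uniformization formula (Proposition~\ref{prop:ex_8.15}); what the normalization $\|\cdot\|$ achieves is precisely to cancel the quadratic correction $c_{\trop,\rr}$ that mediates between $p^*\|\cdot\|_L$ and $q^*\|\cdot\|_M$.
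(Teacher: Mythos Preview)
Your proof is correct and is essentially the same as the paper's: both combine Gubler's formula (Proposition~\ref{lem:gubler}) applied with $\phi=-f_\trop$ and the uniformization identity (Proposition~\ref{prop:ex_8.15}), then extend by continuity. The only difference is cosmetic: the paper works pointwise with the section $s$ on the complement of $|D^\an|$ and extends at the end, whereas you phrase everything directly in terms of metric ratios on $E^\an$ and descend via surjectivity of $p$.
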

We observe that Proposition~\ref{thm:quotient} shows, rather explicitly, that the canonical metric on $L^\an$ is \emph{continuous} and \emph{bounded} in the sense of \cite[\S1.1]{zhsmall}: the ratio with the tautological model metric on $L^\an$ is controlled by a continuous function on the canonical skeleton $\varSigma \subset A^\an$, which is compact.

\begin{proof}[Proof of Proposition~\ref{thm:quotient}] Let $D$ be the divisor of the global section $s$ of $L$.
Let $x \in A^\an$ and $z \in E^\an$ with $p(z)=x$ and assume that $x \notin |D^\an|$. By Proposition~\ref{lem:gubler} we have
\[ f_\trop(\val(z)) = \log \| s(x) \|_\ll - \log \|f(z)\|_{q^*M} \, . \]
By Proposition~\ref{prop:ex_8.15} we have
\[ - \log \| f(z) \|_{q^* M} = -\log \|s(x)\|_L - c_{\trop,\rr}( \val (z))  \, . \]
Combining we find 
\[ \|f_\trop\|(\val(z)) = \log \| s(x) \|_\ll - \log \|s(x)\|_L \, . \]
This proves the equality \eqref{eq:metric_vs_ftrop} on the dense open subset $A^\an \setminus |D^\an|$. The result follows by continuity of the functions at each side of the identity.
\end{proof}

Combining Proposition~\ref{thm:quotient} with Proposition~\ref{prop:f_trop_log_s} we obtain the following interesting vanishing property of the model metric.
\begin{cor}  \label{lem:identical_vanish} The function $\log \|s\|_\ll$ vanishes identically on the canonical skeleton $\varSigma \subset A^\an$.
\end{cor}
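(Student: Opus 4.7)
The plan is to deduce the corollary almost immediately by combining the two preceding results, Proposition \ref{thm:quotient} and Proposition \ref{prop:f_trop_log_s}. Since $\iota\colon \varSigma \to A^\an$ is by construction a section of the tropicalization $\overline{\val}\colon A^\an \to \varSigma$, the map $\overline{\val}$ restricts to the identity on $\varSigma$. Therefore, pulling the identity of Proposition \ref{thm:quotient} back along $\iota$ and evaluating at the section $s$ gives, on $\varSigma \setminus |D^\an|$,
\[ \log \|s\|_\ll - \log \|s\|_L = \|f_\trop\| \, . \]

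Next I would invoke Proposition \ref{prop:f_trop_log_s}, which asserts the identity $\|f_\trop\| = -\log \|s\|_L$ on $\varSigma$. Substituting into the previous display and canceling the $\log\|s\|_L$ terms yields $\log \|s\|_\ll = 0$ on $\varSigma \setminus |D^\an|$.

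Finally, one should remark that the identity in Proposition \ref{prop:f_trop_log_s} is stated as an equality of \emph{continuous} (hence finite-valued) functions on $\varSigma$, which forces $s$ not to vanish at any point of the skeleton; equivalently, $|D^\an| \cap \varSigma = \emptyset$. Consequently $\log \|s\|_\ll$ is finite on all of $\varSigma$, and the identity $\log \|s\|_\ll = 0$ actually holds everywhere on $\varSigma$. There is no real obstacle here: once the tautological model metric has been identified with $\|f_\trop\|$ correcting $\|\cdot\|_L$ on the skeleton (Proposition \ref{thm:quotient}), and $\|f_\trop\|$ has been identified with $-\log\|s\|_L$ on $\varSigma$ (Proposition \ref{prop:f_trop_log_s}), the vanishing is automatic.
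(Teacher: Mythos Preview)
Your proof is correct and follows exactly the paper's approach: the corollary is stated as an immediate consequence of combining Proposition~\ref{thm:quotient} with Proposition~\ref{prop:f_trop_log_s}. Your additional remark that $|D^\an|\cap\varSigma=\emptyset$ (so the identity extends over all of $\varSigma$) is a nice clarification that the paper leaves implicit.
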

Combining Proposition~\ref{thm:quotient} with Theorem~\ref{thm:B_bis} we obtain the following more explicit result in the principally polarized case.
\begin{cor} \label{cor:quotient_pp} Assume that $L$ defines a principal polarization. Let $\kappa \in \varSigma$ be the tropical theta characteristic of~$L$. Then there exists a unique $r \in \qq$ such that the equality
\[ \log \circ \left( \|\cdot\|_\ll/\|\cdot\|_L \right) = \| \varPsi \| \circ \t_\kappa \circ  \overline{\val} + r \]
holds on $A^\an$. Here  the quotient of the metrics is evaluated at any non-zero local section, and $\|\varPsi\|$ is the normalized tropical Riemann theta function as in~\eqref{eq:normalized_tropical_Riemann}.
\end{cor}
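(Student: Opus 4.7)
The plan is to obtain the statement by directly chaining together Proposition~\ref{thm:quotient} and Theorem~\ref{thm:B_bis}, with no new analytic input needed. The only genuinely new observation is that restricting from $X_\rr^*$ to $\varSigma$ via the descent in Theorem~\ref{thm:B_bis}, and then pulling back along $\overline{\val}\colon A^\an\to\varSigma$, is compatible with the formula in Proposition~\ref{thm:quotient}.

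Concretely, I would first fix a non-archimedean theta function $f$ for the section $s$ used to build the tautological model $(\pp,\ll)$, so that Proposition~\ref{thm:quotient} gives the identity
\[
\log\circ\bigl(\|\cdot\|_\ll/\|\cdot\|_L\bigr)=\|f_\trop\|\circ\overline{\val}
\]
of continuous functions on $A^\an$, the quotient being evaluated at any non-zero local section. Since the left-hand side is a single continuous function on $A^\an$, this already shows that $\|f_\trop\|\circ\overline{\val}$ is a well-defined continuous function on $A^\an$; in particular $\|f_\trop\|$ factors through the canonical skeleton $\varSigma=X_\rr^*/Y$, which is of course guaranteed independently by Proposition~\ref{prop:properties_norm_f_trop}(i).

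Next, using the principal polarization assumption, I would invoke Theorem~\ref{thm:B_bis} to produce the unique $r\in\qq$ with
\[
\|f_\trop\|=\|\varPsi\|\circ\t_\kappa+r
\]
as functions on $\varSigma$, where $\kappa\in\varSigma$ is the tropical theta characteristic of $L$. Composing with $\overline{\val}\colon A^\an\to\varSigma$ and substituting into the displayed formula above yields the asserted equality
\[
\log\circ\bigl(\|\cdot\|_\ll/\|\cdot\|_L\bigr)=\|\varPsi\|\circ\t_\kappa\circ\overline{\val}+r
\]
on $A^\an$. For uniqueness of $r$, I would note that $\overline{\val}$ is surjective onto $\varSigma$, so an equality of the above form on $A^\an$ forces the corresponding equality on $\varSigma$, where uniqueness of $r$ is part of Theorem~\ref{thm:B_bis}.

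There is essentially no obstacle: the corollary is a purely formal consequence of results already established earlier in the paper. The only small point to verify is that Proposition~\ref{thm:quotient}, as stated, identifies $\log(\|\cdot\|_\ll/\|\cdot\|_L)$ with $\|f_\trop\|\circ\overline{\val}$ as continuous functions on \emph{all} of $A^\an$ (not just on $A^\an\setminus|D^\an|$); this is already built into its statement, so no additional extension-by-continuity argument is needed on top of what is done in the proof of Proposition~\ref{thm:quotient}.
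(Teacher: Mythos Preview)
Your proposal is correct and matches the paper's approach exactly: the paper states the corollary as an immediate combination of Proposition~\ref{thm:quotient} and Theorem~\ref{thm:B_bis}, which is precisely what you do. Your additional remark on uniqueness of $r$ via surjectivity of $\overline{\val}$ is a reasonable elaboration that the paper leaves implicit.
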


\subsection{Tautological divisors}

Let $(\pp,\ll)$ be the tautological model of the pair $(A,L)$ with respect to the section~$s \in H^0(A,L)$. We may view $s$ as a rational section of the line bundle $\ll$ and consider the Cartier divisor $\dd = \divisor_\ll(s)$ on the scheme $\pp$. 

\begin{prop}  \label{thm:rel_eff}   The divisor $\dd$ is an effective relative Cartier divisor.
\end{prop}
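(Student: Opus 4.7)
The plan is to examine the Weil divisor underlying $\divisor_\ll(s)$, split its support into horizontal and vertical prime divisors, and verify that the horizontal orders are non-negative while the vertical orders vanish; the former will give effectivity and the latter will give relativity (i.e., flatness over $R$).

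First, I would split the set of prime Weil divisors of $\pp$ into \emph{horizontal} ones, i.e., closures $\overline{P}$ in $\pp$ of prime divisors $P$ on the generic fiber $A$, and \emph{vertical} ones, i.e., irreducible components $V$ of the special fiber $\pp_0$. For each horizontal prime $\overline{P}$, the equality $\ord_{\overline{P}}(s) = \ord_P(s|_A)$ gives $\ord_{\overline{P}}(s) \geq 0$, since $s$ is a global section of $L$ on $A$.

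Second, I would show $\ord_V(s) = 0$ for each vertical $V$. The order-reversing bijection between strata of $\pp_0$ and open faces of $\cc/Y$ recalled in \S\ref{sec:taut_Mumford} identifies each top-dimensional stratum of $\pp_0$ with a vertex of $\cc/Y$; hence each irreducible component $V$ corresponds to a vertex $\nu_V \in \varSigma$, and under $\iota \colon \varSigma \hookrightarrow A^\an$ this vertex is exactly the Shilov point $\xi_V \in A^\an$ associated with $V$. Unwinding the definition of the model metric $\|\cdot\|_\ll$ at $\xi_V$, one obtains
\[ \ord_V(s) = -m_V \log \|s(\xi_V)\|_\ll \, , \]
where $m_V$ is the multiplicity of $V$ in $\pp_0$. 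Since $\xi_V$ lies on the canonical skeleton, Corollary~\ref{lem:identical_vanish} forces $\log \|s(\xi_V)\|_\ll = 0$ and hence $\ord_V(s) = 0$.

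Putting the two steps together, $s$ has non-negative order along every codimension-one point of $\pp$, so by the standard good properties of the Mumford model (which is in particular $S_2$) the rational section $s$ extends to a regular section of $\ll$ on $\pp$, making $\dd$ an effective Cartier divisor. Since all vertical orders vanish, $\dd$ contains no irreducible component of $\pp_0$, so $\dd \to \Spec R$ is flat, i.e., $\dd$ is a relative Cartier divisor. The main subtlety is the identification $\xi_V = \iota(\nu_V)$, which requires carefully tracing through Mumford's contracted-product construction and the induced reduction map from $A^\an$ to $\pp_0$; once this identification is in place, Corollary~\ref{lem:identical_vanish} does essentially all of the remaining work.
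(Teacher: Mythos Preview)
Your argument is essentially the paper's own proof: both show that the order of $s$ along each irreducible component $V$ of $\pp_0$ vanishes by identifying the Shilov point of $V$ with a point of the canonical skeleton and then invoking Corollary~\ref{lem:identical_vanish}, after which flatness (and effectivity) follow. The ``main subtlety'' you flag---the identification $\xi_V = \iota(\nu_V)$---is exactly what the paper dispatches by citing \cite[Example~7.2]{gu}, which asserts that $\iota \colon \varSigma \to A^\an$ is a homeomorphism onto the Berkovich skeleton $\Sk(\pp)$; you are also slightly more explicit than the paper about the effectivity half (horizontal orders and the $S_2$/normality of Mumford models), which the paper leaves implicit.
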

\begin{proof}
Let $\Sk(\pp)$ be the Berkovich skeleton of the Mumford model $\pp$ inside $A^\an$ (see, e.g., \cite[\S5.3]{gu}). By \cite[Example~7.2]{gu}, the 
natural inclusion $\iota \colon \varSigma \to A^\an$ is a homeomorphism onto  $\Sk(\pp)$. Let $V$ be an irreducible component of the special fiber $\pp_0$ of $\pp$ and let $x_V \in \Sk(\pp)$ be the Shilov point corresponding to $V$. We see that $x_V \in \iota(\varSigma)$ and, by Corollary~\ref{lem:identical_vanish}, we have
\[ \ord_{V,\ll}(s) = -\log \|s(x_V)\|_\ll = 0 \, . \]
This shows that the Cartier divisor $\dd$ has multiplicity zero along $V$. In other words, no local equation of the Cartier divisor $\dd$ becomes a zero divisor upon reduction to $\pp_0$. This yields that the Cartier divisor $\dd$ is flat over $R$ by, for example, the discussion in \cite[Example~III.9.8.5]{hag}.
\end{proof}
Write $D=\divisor_L(s)$ for the divisor of $s$ on $A$.
\begin{definition}
 We call the pair $(\pp,\dd)$ with $\dd$ as above the \emph{tautological model} of the pair $(A,D)$. Note that the relative Cartier divisor $\dd$ is relatively ample, since the line bundle $\ll$ is relatively ample.
\end{definition}

\subsection{Connection with the models of Alexeev--Nakamura}

In the paper \cite{an} by Alexeev and Nakamura one finds a detailed construction and discussion of projective flat models $(\pp,\varTheta)$ of pairs $(A,D)$ as above in the case that $L$ defines a principal polarization. In the Alexeev--Nakamura model $(\pp,\varTheta)$ of the pair $(A,D)$, the scheme $\pp$ is a semistable Mumford model of $A$ over $R$, and $\varTheta$ is a relative effective Cartier divisor on $\pp$ extending the divisor $D$. 

A closer look at the construction in loc.\ cit.\ yields that the Alexeev--Nakamura model $(\pp,\varTheta)$ of $(A,D)$ is obtained by applying Mumford's construction starting from the pair $(\cc,\phi)$, where $\cc$ is the Voronoi tiling of $X_\rr^*$ shifted by a theta characteristic $k$ of $L$, and where $\phi$ equals the admissible polarization function $-\varPsi \circ \t_k$. By Example~\ref{exa:principal_pol_phi} we conclude that the tautological model of $(A,D)$, if it is semistable, coincides with the Alexeev--Nakamura model of $(A,D)$. In the case of higher degree polarizations, one expects to reobtain the models mentioned in \cite[Remark~5.B]{an}.

\begin{thm} Assume the rigidified line bundle $L$ defines a principal polarization. Let $D$ be the unique effective divisor on $A$ determined by $L$.  Let $(\pp,\varTheta)$ be the Alexeev--Nakamura model of the pair $(A,D)$, and set $\ll_0= \oo_\pp(\varTheta)$. 
Let $\|\cdot\|_L$ denote the canonical metric on $L$. 
Let $\kappa \in \varSigma$ be the tropical theta characteristic of~$L$. There is a unique $r'' \in \qq$ such that the equality
\[ \log \circ \left( \|\cdot\|_{\ll_0}/\|\cdot\|_L \right) = \| \varPsi \| \circ \t_\kappa \circ  \overline{\val} + r'' \]
holds on $A^\an$. Here the quotient of the metrics is evaluated at any non-zero local section, and $\|\varPsi\|$ is the normalized tropical Riemann theta function as in~\eqref{eq:normalized_tropical_Riemann}.
\end{thm}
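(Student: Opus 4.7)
The plan is to reduce to Corollary~\ref{cor:quotient_pp} by establishing that the model metric on $L^\an$ induced by the Alexeev--Nakamura line bundle $\ll_0$ differs from the model metric induced by the tautological line bundle $\ll$ of Definition~\ref{def:taut_model} by a rational constant.

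By the discussion preceding the theorem, both $\ll_0$ and $\ll$ are obtained from Mumford's construction applied to the same polytopal decomposition $\cc$ of $X_\rr^*$, namely the Voronoi tiling shifted by a tropical theta characteristic $k \in X_\qq^*$ of $L$. They differ only in the admissible polarization function used: the Alexeev--Nakamura model uses $\phi_{AN} = -\varPsi \circ \t_k$, while the tautological model uses $\phi_T = -f_\trop$, and by Proposition~\ref{prop:tropical_Riem} these functions differ by a rational constant. Applying Proposition~\ref{lem:gubler} to both Mumford line bundles and taking the difference then yields an identity
\[
\log \circ \bigl(\|\cdot\|_{\ll_0}/\|\cdot\|_\ll\bigr) = c
\]
of continuous functions on $A^\an$, where $c \in \qq$.

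Combining with Corollary~\ref{cor:quotient_pp}, which furnishes the identity $\log \circ (\|\cdot\|_\ll/\|\cdot\|_L) = \|\varPsi\| \circ \t_\kappa \circ \overline{\val} + r$ for a unique $r \in \qq$, one obtains the claimed formula with $r'' = r + c \in \qq$. Uniqueness of $r''$ is immediate: since $\|\varPsi\| \circ \t_\kappa \circ \overline{\val}$ is a prescribed continuous function on $A^\an$ and the left-hand side of the stated identity is as well, the value of $r''$ is pinned down by evaluating at any single point.

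The main obstacle is that the Alexeev--Nakamura polarization function $-\varPsi \circ \t_k$ may only be rational-affine on the cells of $\cc$, rather than integer-affine, so that Proposition~\ref{lem:gubler} is not literally applicable to $(\pp,\ll_0)$ in its stated form. One circumvents this either by passing to a suitable tensor power $\ll_0^{\otimes N}$ for which $N\phi_{AN}$ is integer-affine and then dividing through by $N$, or by base-changing to a finite extension $R' \supset R$ over which the tautological model becomes semistable (Definition~\ref{def:semi-stable}(ii)) and hence coincides on the nose with the Alexeev--Nakamura model; the formula then descends back to $R$ using the base-change invariance of the canonical metric, the skeleton $\varSigma$, and the tropical theta characteristic $\kappa$.
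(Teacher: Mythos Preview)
Your argument is correct and reaches the same endpoint as the paper---reducing to Corollary~\ref{cor:quotient_pp} by showing that $\|\cdot\|_{\ll_0}$ and the tautological model metric $\|\cdot\|_\ll$ differ by an additive constant---but the mechanism you use to establish that constant is different. You compare the two Mumford line bundles through their admissible polarization functions: since $\phi_T = -f_\trop$ and $\phi_{AN} = -\varPsi\circ\t_k$ differ by the rational constant $r'$ of Proposition~\ref{prop:tropical_Riem}, Gubler's formula (Proposition~\ref{lem:gubler}) applied to each (after clearing denominators by a tensor power, as you note) gives $\log(\|\cdot\|_{\ll_0}/\|\cdot\|_\ll) = r'$. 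The paper instead compares $\ll$ and $\ll_0$ \emph{via their divisors}: it invokes Proposition~\ref{thm:rel_eff} to see that $\divisor_\ll(s)$ is the relative effective Cartier divisor $\varTheta$, so that $\ll$ and $\ll_0 = \oo_\pp(\varTheta)$ agree up to the pullback of a line bundle on $\Spec R$, whence the model metrics differ by an \emph{integer} constant. The paper's route sidesteps the integer-affineness issue you flag and yields the slightly sharper conclusion $r'' - r \in \zz$; your route is more transparently ``tropical'' in that it reads the constant directly off the polarization functions, at the cost of the tensor-power manoeuvre. Your base-change workaround is also fine but is in effect the paper's divisor argument transported to an extension.
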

\begin{proof} Let $s \in H^0(A,L)$ be any non-zero global section of $L$. Let $(\pp,\ll)$ be the tautological model of $(A,L)$ determined by the section $s$. From Proposition~\ref{thm:rel_eff} we obtain that $\divisor_\ll(s) = \varTheta $. We deduce that there exists a generically trivial line bundle $N$ over $\Spec R$ and an isomorphism $\ll \cong \ll_0 \otimes \pi^*N$ extending the identity map on $L$ over $\pp$. Here $\pi \colon \pp \to \Spec R$ denotes the structure map. We see that there is an $a \in \zz$ with $-\log \| \cdot \|_{\ll} = -\log \|\cdot\|_{\ll_0} + a$. We obtain the required equality upon invoking Corollary~\ref{cor:quotient_pp}.
\end{proof}

\section{Proof of Theorem~\ref{thm:A}} \label{sec:Neron}

We recall the setup and the statement for convenience. Let $A$ be an abelian variety over $F$ and let $L$ be a rigidified ample line bundle on $A$. Let $\|\cdot\|_L$ denote the canonical metric on~$L$. Let $s \in H^0(A,L)$ be a non-zero global section and write $D=\divisor_L(s)$. Let $f$ be a theta function associated to~$s$, let $\varSigma \subset A^\an$ be the canonical skeleton of $A^\an$, and let $\|f_\trop\| \colon \varSigma \to \rr$ be the normalized tropicalization of $f$ (Definition~\ref{def:normaltheta}). 

Let $\nn$ be the N\'eron model of $A$ over $R$ and let $\varPhi_\nn$ denote the group of connected components of $\nn$. Let $\sp \colon A(F) \to \varPhi_\nn$ denote the specialization map. As we have seen, the map $\varPhi_\nn \to X^*/Y$ sending $\sp(x)$ for $x \in A(F)$ to $\overline{\val}(x)$ is a group isomorphism. 
Therefore, we view the group $\varPhi_\nn$ canonically as a subgroup of the canonical skeleton $\varSigma$.
 
\begin{thm} \label{thm:A_bis} Let $N \in \zz_{>0}$ be any positive integer such that $N \cdot \varPhi_\nn =0$.
\begin{itemize} 
\item[(i)] The restriction of the function $\|f_\trop\| $ to $\varPhi_\nn$ takes values in $ \frac{1}{2N}\zz$.
\item[(ii)] Let $x \in A(F) \setminus |D|$ and write $x_0=\sp(x)$. The equality
\begin{equation} \label{Neron_bis}
 -\log \|s(x)\|_L = \mathbf{i}(x,D) + \|f_\trop\| (x_0) 
 \end{equation}
holds. Here  $\mathbf{i}(x,D)$ denotes the intersection multiplicity of the Zariski closure of $x$ in $\nn$ with the thickening of $D$ over $\nn$. 
\end{itemize}
\end{thm}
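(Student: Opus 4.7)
My plan is to obtain part~(i) as a direct consequence of Proposition~\ref{prop:properties_norm_f_trop}, and to derive part~(ii) by combining Proposition~\ref{thm:quotient} with a comparison of intersection multiplicities on the tautological Mumford model $\pp$ and the N\'eron model $\nn$.

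For part~(i), recall from the paragraph preceding the theorem that specialization identifies $\varPhi_\nn$ with $X^*/Y$, placing $\varPhi_\nn$ inside the canonical skeleton $\varSigma = X_\rr^*/Y$ as its torsion subgroup. Any $x_0 \in \varPhi_\nn$ lifts to some $\nu \in X^*$ with $N\nu \in Y$, since $N \cdot \varPhi_\nn = 0$. Proposition~\ref{prop:properties_norm_f_trop}(iv) then forces $\|f_\trop\|(\nu) \in \tfrac{1}{2N}\zz$, and the $Y$-invariance from Proposition~\ref{prop:properties_norm_f_trop}(i) ensures that this value depends only on the class of $\nu$ modulo $Y$, which is $x_0$.

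For part~(ii), I would evaluate the identity of Proposition~\ref{thm:quotient} at the section $s$ and at the point $x \in A(F)\setminus |D|$ to obtain
\[
-\log\|s(x)\|_L \;=\; -\log\|s(x)\|_\ll \;+\; \|f_\trop\|(\overline{\val}(x)),
\]
where $\ll$ is the line bundle on the tautological Mumford model $\pp$ of $(A,L)$ associated to $s$. For the $F$-rational point $x$ the tropicalization $\overline{\val}(x)$ coincides with $x_0 = \sp(x)$ under $\varPhi_\nn \isom X^*/Y \hookrightarrow \varSigma$, so only the intersection-theoretic identity
\[
-\log\|s(x)\|_\ll \;=\; \mathbf{i}(x,D)
\]
remains to be proved. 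By the definition of the model metric, the left-hand side equals the intersection multiplicity on $\pp$ of the Zariski closure $\overline{x}_\pp$ with the Cartier divisor $\dd = \divisor_\ll(s)$; by Proposition~\ref{thm:rel_eff} this $\dd$ is an effective relative Cartier divisor extending $D$, so it has no vertical components and coincides with the Zariski closure of $D$ in $\pp$.

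What remains is to compare the intersection of $\overline{x}$ with the horizontal extension of $D$ when the computation is performed on $\pp$ versus on $\nn$. I would first pass to a finite extension of $R$ to arrange that $\pp$ is semistable (Definition~\ref{def:semi-stable}(ii)); all three quantities in the desired identity are compatible with such a base change. On a semistable tautological model, the N\'eron mapping property applied to the smooth locus $\pp^{\sm}$ produces a canonical morphism $\pp^{\sm} \to \nn$ which is \'etale-locally an isomorphism around the specialization of $x$, so that the local length computations on both sides agree. The main obstacle is precisely this \'etale-local comparison: the components of $\pp_0$ and $\nn_0$ are indexed by different combinatorial data (faces of $\cc/Y$ on the Mumford side versus elements of $\varPhi_\nn$ on the N\'eron side), and one must verify that $\overline{x}_\pp$ meets $\pp^{\sm}$ and that its image in $\nn$ coincides with $\overline{x}_\nn$. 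A robust alternative is to compute both intersection multiplicities on a common regular model dominating $\pp$ and $\nn$, where the strict transforms of $\dd$ and $\overline{D}$ both recover the Zariski closure of $D$ and the horizontal intersection numbers must therefore coincide.
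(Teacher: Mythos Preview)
Your overall strategy matches the paper's: part~(i) is exactly Proposition~\ref{prop:properties_norm_f_trop}(iv), and for part~(ii) the paper likewise evaluates Proposition~\ref{thm:quotient} at $x$ to reduce to the identity $(\overline{x}_\pp,\dd)=\mathbf{i}(x,D)$.

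Where your proposal falls short is in the execution of that last comparison. Your first approach (base-change to a semistable $\pp$ and use $\pp^{\sm}\to\nn$) runs into the obstacle you yourself name---there is no reason $\overline{x}_\pp$ lands in $\pp^{\sm}$---and you do not resolve it. Your ``robust alternative'' is closer to what the paper does, but is too vague: invoking ``strict transforms recover the Zariski closure of $D$'' is not the right mechanism, since the Zariski closure of $D$ need not be Cartier on an arbitrary regular model, and you do not say how to build the common model.

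The paper's device is concrete and avoids base change entirely. One passes to a \emph{regular Mumford model} $\widetilde{\pp}$ obtained from a smooth refinement of the cone decomposition underlying $\pp$, equipped with a map $\mu\colon\widetilde{\pp}\to\pp$ and $\mu^*\ll=\widetilde{\ll}$. By K\"unnemann \cite[\S4.4]{ku} there is an open immersion $j\colon\nn\hookrightarrow\widetilde{\pp}$. The divisor $\widetilde{\dd}=\mu^*\dd$ is again relative Cartier (the vanishing argument of Proposition~\ref{thm:rel_eff} applies verbatim on $\widetilde{\pp}$), so its pullback $j^*\widetilde{\dd}$ is a relative Cartier extension of $D$ on $\nn$, hence equals the thickening $\overline{D}$ by \emph{uniqueness of thickenings}. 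Since $\overline{x}_\pp=\mu_*j_*\overline{x}_\nn$, the two intersection numbers agree by a one-line pullback computation. This is the missing ingredient in your sketch.
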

\begin{proof}  Property (i) follows from Proposition~\ref{prop:properties_norm_f_trop}(iv). To prove property (ii), let $(\pp,\ll)$ be the tautological model of the pair $(A,L)$ with respect to the global section $s$ (Definition~\ref{def:taut_model}), and write $\dd = \divisor_\ll(s)$. Then $\pp$ is projective over $R$ and $\dd$ is a relative effective Cartier divisor on $\pp$, by Proposition~\ref{thm:rel_eff}. Let $\overline{x}_\pp$ be the section of $\pp$ over $\Spec R$ extending the point $x \in A(F)$, by the valuative criterion of properness. Let $v$ denote the closed point of $\Spec R$ and let $(\overline{x}_\pp,\dd)$ denote the multiplicity of the Cartier divisor $\overline{x}_\pp^*\dd$ at $v$. 

We have $(\overline{x}_\pp,\dd) = -\log \|s(x)\|_\ll$, by definition of model metrics. From Proposition~\ref{thm:quotient} we obtain the identity
\begin{equation} \label{eqn_almost_thmD}
-\log \|s(x)\|_L = (\overline{x}_\pp,\dd) + \| f_\trop \|(x_0) \, . 
\end{equation}
Comparing with \eqref{Neron_bis}, we are therefore reduced to showing the equality of integers
\begin{equation} \label{eqn:to_show}
\mathbf{i}(x,D) = (\overline{x}_\pp,\dd) \, . 
\end{equation}
Consider the short exact sequence 
\begin{equation}
 1 \to \tt \to \ee \to \bb \to 0 
\end{equation}
of semi-abelian schemes over $R$ given in \eqref{eqn:semiab}.
We recall from \S\ref{sec:taut_Mumford} that Mumford's construction resulting in the tautological model $(\pp,\ll)$ is done using a torus embedding of the toric part $\tt$ of $\ee$, based on the rational cone decomposition of $(X_\rr^* \times \rr_{> 0}) \cup \{0\}$ formed by the cones over the maximal domains of linearity of $f_\trop$, seen as a rational polytopal decomposition of $X_\rr^* \times \{1\}$. 

As is explained in \cite[\S3.1]{ku}, passing to a smooth refinement of this cone decomposition and applying again Mumford's construction produces a regular Mumford model $(\widetilde{\pp},\widetilde{\ll})$ of $(A,L)$ together with a map of $R$-models $\mu \colon \widetilde{\pp} \to \pp$ and a canonical identification $\mu^*\ll = \widetilde{\ll}$.  Writing $\widetilde{\dd} = \divisor_{\widetilde{\ll}}(s)$ we have the equality of Cartier divisors $\widetilde{\dd} = \mu^*\dd$ on $\widetilde{\pp}$.

Let $\overline{D}$ denote the thickening of $D$ over $\nn$.
The argument used to prove Proposition~\ref{thm:rel_eff} also applies to the Mumford model $(\widetilde{\pp},\widetilde{\ll})$, showing that the divisor $\widetilde{\dd} $ is relative Cartier. As is explained in  \cite[\S4.4]{ku}, there exists an open immersion $j \colon \nn \to \widetilde{\pp}$ over $R$. By flatness of open immersions, we see that the divisor $ j^*\widetilde{\dd} = (\mu \circ j)^*\dd$ is relative Cartier, hence a thickening of $D$ over $\nn$. By uniqueness of thickenings we conclude that the equality $(\mu \circ j)^*\dd = \overline{D}$ of Cartier divisors holds on $\nn$.  

Let $\overline{x}_\nn$ denote the Zariski closure of $x$ in $\nn$. Note that $\overline{x}_\pp = \mu_*j_* \overline{x}_\nn$. We can now compute
\[ \begin{split} 
\mathbf{i}(x,D) 
& = (\overline{x}_\nn,\overline{D}) \\
 & = (\overline{x}_\nn, (\mu \circ j)^*\dd) \\
 &= \ord_v \overline{x}_\nn^* (\mu \circ j)^* \dd \\ 
 & = \ord_v \overline{x}_\pp^* \dd \\
 & =  (\overline{x}_\pp,\dd) \, , \end{split} \]
establishing \eqref{eqn:to_show}. 
\end{proof}

The following version of the formula in Theorem~\ref{thm:A_bis} is sometimes more convenient.
\begin{thm} \label{thm:alternative_thmA} Let $x \in A(F) \setminus |D|$ and write $x_0=\sp(x)$. The equality
\begin{equation}
 -\log \|s(x)\|_L = \mathbf{i}(x,D) -\log\|s (x_0)\|_L 
 \end{equation}
holds. 
\end{thm}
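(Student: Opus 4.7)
The plan is to derive Theorem~\ref{thm:alternative_thmA} as an immediate consequence of Theorem~\ref{thm:A_bis}(ii) combined with Proposition~\ref{prop:f_trop_log_s}. The key observation is that the quantity $\|f_\trop\|(x_0)$ appearing on the right-hand side of \eqref{Neron_bis} admits a second interpretation, namely as $-\log\|s(x_0)\|_L$, once one remembers that $\varPhi_\nn$ sits inside the canonical skeleton $\varSigma$ via the isomorphism $\varPhi_\nn \isom X^*/Y$ and the inclusion $\iota \colon \varSigma \hookrightarrow A^\an$ discussed in \S\ref{subsec:canonical}.

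In more detail, first I would invoke Theorem~\ref{thm:A_bis}(ii) to rewrite
\[ -\log\|s(x)\|_L = \mathbf{i}(x,D) + \|f_\trop\|(x_0), \]
where $x_0 = \sp(x)$ is regarded as a point of $\varSigma$ via the chain $\varPhi_\nn \subset X^*/Y \subset \varSigma$. Next I would apply Proposition~\ref{prop:f_trop_log_s}, which asserts the identity $\|f_\trop\| = -\log\|s\|_L$ of continuous functions on $\varSigma$, to the specific point $x_0 \in \varSigma$, obtaining
\[ \|f_\trop\|(x_0) = -\log\|s(x_0)\|_L. \]
Substituting this into the first equation yields precisely the statement of the theorem.

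The only potential subtlety is that the right-hand side $-\log\|s(x_0)\|_L$ has to make sense, i.e.\ one needs $s(x_0) \neq 0$, or equivalently $\iota(x_0) \notin |D^\an|$. But this is guaranteed automatically: Proposition~\ref{prop:properties_norm_f_trop}(ii) shows that $\|f_\trop\|$ is everywhere finite on $X_\rr^*$, and hence on $\varSigma$, so by Proposition~\ref{prop:f_trop_log_s} the Green's function $-\log\|s\|_L$ is finite on the whole skeleton $\varSigma$. In particular $\varSigma$ is disjoint from $|D^\an|$, so the value $\|s(x_0)\|_L$ is a well-defined positive real number regardless of whether $x$ itself lies on $|D|$ (what we did assume). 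There is no serious obstacle here; the content of the theorem is essentially to package Theorem~\ref{thm:A_bis}(ii) and Proposition~\ref{prop:f_trop_log_s} into one formula that makes no reference to theta functions and tropicalizations, which is often more convenient in applications.
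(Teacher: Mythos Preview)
Your proof is correct and follows exactly the paper's own approach: combine Theorem~\ref{thm:A_bis}(ii) with Proposition~\ref{prop:f_trop_log_s}. Your additional remark that $\varSigma$ is disjoint from $|D^\an|$ (so that $-\log\|s(x_0)\|_L$ is finite) is a welcome clarification, though the paper leaves this implicit.
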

\begin{proof} Combine Theorem~\ref{thm:A_bis}(ii) with Proposition~\ref{prop:f_trop_log_s}.
\end{proof}

\begin{remark}
Recall from Proposition~\ref{prop:ex_8.15} that, for all $x \in A^\an$ and $z \in E^\an$ with $p(z)=x$ and $x \notin |D^\an|$, we have 
\[ -\log \|s(x)\|_L = c_{\trop,\rr}( \val (z)) - \log \| f(z) \|_{q^* M} \, . \]
Now for $x \in A^\an$ fixed and $z$ running though $E^\an$ with $p(z)=x$ we have that $c_{\trop,\rr}(\val(z))$ is bounded from below, and  $ - \log \| f(z) \|_{q^* M}$ bounded from above. This means that we could write
\[ -\log \|s(x)\|_L = \min_{z : p(z)=x} c_{\trop,\rr}( \val (z)) + \max_{z: p(z)=x} \{ - \log \| f(z) \|_{q^* M} \} \, . \]
Comparing with Theorem~\ref{thm:A_bis}, it is perhaps tempting to guess that there exists $t \in F^\times$ such that
the equalities
\[ \mathbf{i}(x,D) \stackrel{?}{=} \max_{z: p(z)=x} \{ - \log \| t f(z) \|_{q^* M} \} \, , \quad \| (tf)_\trop(x_0)\| \stackrel{?}{=}  \min_{z : p(z)=x} c_{\trop,\rr}( \val (z))  \]
hold true uniformly over all $x \in A(F) \setminus |D|$. Here we write $x_0 = \sp(x)$. However, such equalities can not be true in general: note that the function $\|f_\trop \|$ depends on the choice of global section $s$, whereas the function $\min_{z : p(z)=x} c_{\trop,\rr}( \val (z))$ is an invariant of the triple $(M,\varPhi,c)$ for $L$. The equalities above do hold true for suitable $t \in F^\times$ if $L$ defines a principal polarization.
\end{remark}

\section{Normalized canonical local heights} \label{sec:normalized}

We continue to work with the rigidified ample line bundle~$L$.
The presence of the canonical skeleton $\varSigma \subset A^\an$ suggests the following canonical way of renormalizing non-archimedean canonical local height functions. Let $\mu_H$ denote the Haar measure on the real torus $\varSigma$, with volume normalized to be equal to one. Denoting by $\iota \colon \varSigma \to A^\an$ the inclusion, we have a pushforward measure $\iota_*\mu_H$ on $A^\an$. In the following, we will denote this pushforward measure by $\mu_H$ as well.

Let $s \in H^0(A,L)$ be a non-zero global section of $L$ and set $D = \divisor_L(s)$. We recall that we have the  canonical local height function
\[ \lambda_D(x) = -\log \|s(x)\|_L \, , \quad x \in A^\an  \setminus |D^\an|  \, , \]
associated to the ample divisor~$D$. 
\begin{definition} \label{def:normalized} (A canonical normalization) 
 As follows from Proposition~\ref{prop:f_trop_log_s}, the Green's function $\lambda_D$ is $\mu_H$-integrable -- in fact, it restricts to a continuous function on $\varSigma$.  Thus we may renormalize the Green's function $\lambda_D$ by defining
\begin{equation} \label{eqn:def_our_normalized}
 \lambda'_D(x) = -\log \|s(x)\|_L +  \int_{A^\an} \log \|s\|_L \, \d \, \mu_H \, , \quad x \in A^\an \setminus |D^\an| \, .
\end{equation}
We observe that the Green's function $\lambda'_D$ is entirely intrinsic to the divisor~$D$. Indeed,  changing the global section $s$ that defines $D$ by a scalar multiple does not change the function $\lambda'_D$. Moreover, the function $\lambda'_D$ is independent of the choice of rigidification of~$L$. 
\end{definition}
It is easy to see the following identities:
\begin{itemize}
\item[] (additivity) let $\t_a \colon A \to A$ denote translation by $a$  for $a \in A(F)$. Then $\lambda'_{\t_a^*D} = \t_a^* \lambda'_D$; 
\vspace{2mm}
\item[] (translation) $\lambda'_{D_1} + \lambda'_{D_2} = \lambda'_{D_1+D_2}$
for ample divisors $D_1, D_2$ on $A$;
\vspace{2mm}
\item[] (isogenies) let $\varphi \colon A \to B$ be an isogeny of abelian varieties over $F$, and let $D$ be an ample divisor on $B$. Then $\lambda'_{\varphi^*D} = \varphi^* \lambda'_{D}$.
\end{itemize}

Let $\lambda'_D$ be the normalized canonical local height determined by $D$ as in Definition~\ref{def:normalized}.
The following proposition is immediate from Theorem~\ref{thm:alternative_thmA}. 
\begin{prop}  \label{prop:normalized_classical} Let $s$ be any global section of $L$ whose divisor is $D$.  Let $x \in A(F) \setminus |D|$ and write $x_0=\sp(x)$. Then the formula
\[ \lambda'_D(x) = \mathbf{i}(x,D) -\log\|s (x_0)\|_L  + \int_{A^\an} \log \|s\|_L \, \d \, \mu_H \]
holds.
\end{prop}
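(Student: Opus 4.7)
The plan is essentially to unwind the definition of $\lambda'_D$ and apply Theorem~\ref{thm:alternative_thmA} directly. Since the proposition is flagged as immediate from that theorem, there is no real obstacle beyond a two-line substitution, but I will spell out the two ingredients I would invoke.

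First, I would recall the defining formula \eqref{eqn:def_our_normalized}:
\[
\lambda'_D(x) = -\log\|s(x)\|_L + \int_{A^\an} \log\|s\|_L \, \d\mu_H,
\]
valid for any $x \in A^\an \setminus |D^\an|$. Note that the integral is finite because, by Proposition~\ref{prop:f_trop_log_s}, the restriction of $-\log\|s\|_L$ to the canonical skeleton $\varSigma$ equals the continuous function $\|f_\trop\|$, and $\mu_H$ is supported on $\varSigma$; in particular the second term on the right is a well-defined constant depending only on the section $s$.

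Second, I would invoke Theorem~\ref{thm:alternative_thmA}, which yields for $x \in A(F) \setminus |D|$ with $x_0 = \sp(x)$ the identity
\[
-\log\|s(x)\|_L = \mathbf{i}(x,D) - \log\|s(x_0)\|_L.
\]
Substituting this into the formula for $\lambda'_D(x)$ above gives
\[
\lambda'_D(x) = \mathbf{i}(x,D) - \log\|s(x_0)\|_L + \int_{A^\an} \log\|s\|_L \, \d\mu_H,
\]
which is exactly the claimed identity. No choice of rigidification or triple enters the right-hand side in an essential way, consistent with the intrinsic nature of $\lambda'_D$ observed in Definition~\ref{def:normalized}.

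There is no genuine difficulty here; the only thing worth checking is that $x_0 = \sp(x)$ lies in $\varSigma \setminus |D^\an|$ so that $\log\|s(x_0)\|_L$ makes sense as a real number. This is automatic: the map $\overline{\val}$ sends $x$ to a point of $\varPhi_\nn \subset \varSigma$ via the identification in \cite[Corollary~III.8.2]{fc}, and $|D^\an| \cap \varSigma$ consists of the points where $\|f_\trop\|$ takes the value $+\infty$, but $\|f_\trop\|$ is finite and continuous on $\varSigma$ by Propositions~\ref{prop:f_trop_properties} and \ref{prop:properties_norm_f_trop}. Hence the right-hand side is well-defined, and the proof is complete.
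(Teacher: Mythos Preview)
Your proof is correct and follows exactly the approach the paper indicates: substitute Theorem~\ref{thm:alternative_thmA} into the defining formula \eqref{eqn:def_our_normalized} for $\lambda'_D$. Your added verification that $x_0 \notin |D^\an|$ (equivalently, that $\varSigma \cap |D^\an| = \emptyset$ because $\|f_\trop\|$ is everywhere finite on $\varSigma$) is a reasonable sanity check and is implicit in the paper's use of Proposition~\ref{prop:f_trop_log_s}.
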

\begin{cor} \label{cor:normalized_good} Assume that $A$ has good reduction. Let $x \in A(F) \setminus |D|$. Then the formula
\[ \lambda'_D(x) = \mathbf{i}(x,D) \]
holds.
\end{cor}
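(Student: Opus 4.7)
The plan is to deduce the corollary from Proposition~\ref{prop:normalized_classical} by showing that in the good reduction case the two non-intersection terms cancel exactly. By that proposition, for $x \in A(F) \setminus |D|$ with $x_0 = \sp(x)$,
\[
\lambda'_D(x) = \mathbf{i}(x,D) - \log\|s(x_0)\|_L + \int_{A^\an} \log\|s\|_L \, \d \mu_H,
\]
so it suffices to prove the last two terms sum to zero.

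The key observation is that good reduction forces the canonical skeleton $\varSigma \subset A^\an$ to collapse to a single point. Indeed, if $A$ extends as an abelian scheme over $R$, then in the Raynaud extension \eqref{eqn:semiab} the toric part $\tt$ is trivial, so $X = Y = 0$, hence $X_\rr^* = 0$ and $\varSigma = X_\rr^*/Y = \{0\}$. Under $\iota \colon \varSigma \to A^\an$, this point is the Gauss point $\xi$ of $A^\an$, i.e.\ the Shilov point corresponding to the generic point of the special fiber of the abelian scheme extending $A$. Correspondingly, $\varPhi_\nn = X^*/Y = 0$, so $x_0 = 0 \in \varSigma$ for every $x \in A(F)$, and this element is identified with $\xi \in A^\an$ for the purpose of evaluating $\|\cdot\|_L$.

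Since $\varSigma$ is a point, the normalized Haar measure $\mu_H$ on $\varSigma$ is the unit point mass at $0$, and its pushforward along $\iota$ is the Dirac measure at $\xi \in A^\an$. Therefore
\[
\int_{A^\an} \log\|s\|_L \, \d \mu_H = \log\|s(\xi)\|_L = \log\|s(x_0)\|_L,
\]
so the two terms in the displayed formula cancel and $\lambda'_D(x) = \mathbf{i}(x,D)$, as required. There is no real obstacle here; the only point worth checking carefully is that $\sp(x)$ and the support of $\mu_H$ land on the \emph{same} point of $A^\an$, which follows from the identification $\varPhi_\nn \cong X^*/Y \subset \varSigma$ and the fact that both reduce to the trivial group in the good reduction case.
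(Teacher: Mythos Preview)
Your proof is correct and is exactly the argument the paper has in mind: the corollary is stated immediately after Proposition~\ref{prop:normalized_classical} with no separate proof, and your observation that good reduction forces $\varSigma$ to be a single point (so that $x_0$ and the support of $\mu_H$ coincide and the two non-intersection terms cancel) is precisely the intended deduction.
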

By additivity we obtain a normalized canonical local height attached to any divisor $D$ on $A$.

\section{Elliptic curves} \label{sec:elliptic}

Let $A$ be an elliptic curve over $F$ and let $D$ be a divisor on $A$. In this section we study the  normalized canonical local height $\lambda'_D$ associated to $D$ as discussed in the previous section. By translation and additivity we may reduce to the case that $D=(O)$, where $O$ is the origin of $A$. 

\subsection{Comparison with Tate's normalization} As is well known, Tate already gave, in a 1968 letter to Serre, a canonical way of normalizing the canonical local height associated to $D=(O)$, see for instance \cite[Chapter 3, \S5]{langElliptic}, \cite[\S6.5]{lectures_mordell} or \cite[Chapter VI]{sil_advanced}. Let
\[ y^2 + a_1xy + a_3y = x^3 + a_2x^2 + a_4x + a_6 \, , \quad a_1, a_2, a_3, a_4, a_6 \in F \]
be a Weierstrass equation for $A$, let $\varDelta$ be its discriminant, and write $z=x/y$ which is a local coordinate near $O$. 
Tate's normalization of a canonical local height $\lambda_D$ on $A(F)$ associated to $D$ is determined by requiring that 
\begin{equation} \label{eqn:Tate_normalization}
 \lim_{p \to 0} \left\{  \lambda_D(p) + \log |z(p)|   \right\} = -\frac{1}{12}\log|\Delta|  
\end{equation}
should hold, where the limit is taken over $A(F) \setminus |D|$, in the non-archimedean topology. One quickly checks that Tate's normalization is independent of the choice of Weierstrass equation for $A$, and is hence intrinsic to the elliptic curve~$A$. 

In \cite[Theorem~VI.4.2]{sil_advanced} and \cite[\S6.5]{lectures_mordell} one finds explicit formulas for Tate's normalized canonical local height.  There are two cases to consider: the case where $A$ has good reduction, and the case where $A$ has split multiplicative reduction. 

A direct comparison between the explicit formulas in loc.\ cit.\ and the explicit formulas that we find below for $\lambda'_D$ reveals that -- perhaps unsurprisingly -- both normalizations end up giving the \emph{same}  canonical local height.
\begin{prop} Let $A$ be an elliptic curve over $F$, with origin $O$, and set $D=(O)$. Denote the canonical extension over $A^\an$ of Tate's normalized canonical local height by $\widehat{\lambda}_D$. Let $\lambda'_D$ be the normalized canonical local height associated to $D$ as in Definition~\ref{def:normalized}. The Green's functions $\lambda'_D$ and $\widehat{\lambda}_D$ coincide on $A^\an$.
\end{prop}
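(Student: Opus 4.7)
The plan exploits that both $\lambda'_D$ and $\widehat{\lambda}_D$ are canonical local heights for the divisor $D=(O)$, hence differ by a single additive constant $c\in\rr$ on $A(F)\setminus\{O\}$. Since the two Green's functions on $A^\an$ are continuous extensions of these, the same constant relates them on $A^\an\setminus|D^\an|$, and the statement reduces to showing $c=0$. Both normalizations are intrinsic to $(A,D)$ and transform compatibly under finite base change, so after replacing $F$ by a finite extension I may assume $A$ has either good or split multiplicative reduction.

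In the good reduction case, Corollary~\ref{cor:normalized_good} gives $\lambda'_D(x)=\mathbf{i}(x,D)$ on $A(F)\setminus\{O\}$. In a minimal Weierstrass equation $v(\Delta)=0$, and the explicit formula of \cite[Theorem~VI.4.1]{sil_advanced} for $\widehat{\lambda}_D$ reduces to the same intersection multiplicity on the N\'eron model (which in this case coincides with the minimal Weierstrass model, with $O$ the point at infinity and local parameter $-x/y$). Hence $c=0$ here.

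In the split multiplicative case, fix a Tate uniformization $A^\an=\gg_m^\an/q^\zz$ with $v(q)>0$. Then $X=Y=\zz$, the polarization $\varPhi$ is the identity, the canonical skeleton is $\varSigma=\rr/v(q)\zz$ with inner product $[\nu_1,\nu_2]=\nu_1\nu_2/v(q)$, and the tropical theta characteristic of $L=\oo(O)$ is $\kappa=v(q)/2$. Take $f(u)=(1-u)\prod_{n\ge1}(1-q^nu)(1-q^nu^{-1})$ as the non-archimedean theta function for a global section of $L$ cutting out $D$; its transformation law $f(qu)=-u^{-1}f(u)$ together with the cocycle condition on $\mathbf{z}$ pin down $c_{\trop,\rr}(\nu)=\nu(\nu-v(q))/(2v(q))$. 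A direct Gauss-norm computation shows $f_\trop\equiv 0$ on $[0,v(q)]$, so Definition~\ref{def:normaltheta} yields
\[ \|f_\trop\|(\bar\nu) = \frac{\bar\nu(\bar\nu-v(q))}{2v(q)} \qquad \text{for } \bar\nu\in[0,v(q)). \]
Evaluating $\int_\varSigma\|f_\trop\|\,d\mu_H = -v(q)/12$ and combining Theorem~\ref{thm:A_bis} with Definition~\ref{def:normalized} gives
\[ \lambda'_D(P) = \mathbf{i}(P,D) + \frac{\bar\nu(\bar\nu-v(q))}{2v(q)} + \frac{v(q)}{12} \]
for the reduced representative $v(u(P))=\bar\nu\in[0,v(q))$. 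Since $|\Delta|=|q|$ on a Tate curve, the identity $\tfrac12 B_2(\bar\nu/v(q))v(q)=\bar\nu(\bar\nu-v(q))/(2v(q))+v(q)/12$, together with $\mathbf{i}(P,D)=v(1-u(P))$ for $\bar\nu=0$ and $\mathbf{i}(P,D)=0$ otherwise, makes this expression match Silverman's explicit formula \cite[Theorem~VI.4.2]{sil_advanced} for $\widehat{\lambda}_D$ on the reduced representative (where the infinite series in loc.\ cit.\ collapses to the single term $-\log|1-u|$). Hence $c=0$.

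The main obstacle is the Tate-curve bookkeeping just sketched: writing $f$ down explicitly, computing $f_\trop$ and its Haar-average, and matching $\mathbf{i}(P,D)$ with the correct local term in Silverman's formula. No conceptual difficulty arises; the key numerical identity is $\int_0^1 B_2(t)\,dt = 0$, which forces the constant contribution $v(q)/12 = -\tfrac{1}{12}\log|\Delta|$ to appear identically on both sides.
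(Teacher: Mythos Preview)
Your approach is correct and essentially identical to the paper's: both split into the good and split multiplicative reduction cases, invoke Corollary~\ref{cor:normalized_good} for the former, and in the latter compute $\|f_\trop\|$ explicitly on $[0,v(q))$, evaluate its Haar integral to $-v(q)/12$, and match the resulting formula $\lambda'_D(x)=\mathbf{i}(x,D)+\tfrac{\ell}{2}B_2(i/\ell)$ against the known explicit formulas for $\widehat{\lambda}_D$ in \cite{sil_advanced, lectures_mordell}. One small remark: under the paper's standing hypothesis that $A$ has split semi-abelian reduction over $R$, an elliptic curve already has either good or split multiplicative reduction, so your preliminary finite base change is unnecessary.
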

In other words, for the canonical extension $\widehat{\lambda}_D$ over $A^\an$ of Tate's normalized canonical local height on $A(F)$, we have the property that 
\[ \int_{A^\an} \widehat{\lambda}_D \, \d \, \mu_H = 0 \, . \]
A direct proof of this fact, without using explicit formulas, can be given by arguing along the lines of \cite[\S 5]{dj_local_Galois}.

\subsection{Good reduction} Assume that  $A$ has good reduction over $F$. Let $\nn$ be the N\'eron model of $A$ over $R$, which in this case is an abelian scheme over $R$. By Corollary~\ref{cor:normalized_good} we simply have 
\[ \lambda'_D(x) = \mathbf{i}(x,D) \, , \quad x \in A(F) \setminus |D| \, ,  \] 
with $\mathbf{i}(x,D)$ the intersection multiplicity between the Zariski closure of $x$ and the Zariski closure of $O$ on the arithmetic surface $\nn$.

\subsection{Split multiplicative reduction}

Assume next that $A$ has split multiplicative reduction over $F$, i.e., is a Tate elliptic curve. 
We may write $A^\an = \gg_\m^\an / q^\zz$ for some $q \in \gg_\m(F)$ satisfying $0<|q|<1$.  
We may identify $X$ with $\zz$ so that $X^*=\zz$ and $X_\rr^*=\rr$.  Let $L=\oo_A(D)$ and choose a trivialization of~$L$. As is well known, the function
\begin{equation} \label{theta_product}
 \theta(z) = (1-z) \prod_{n=1}^\infty (1-q^nz)(1-q^n z^{-1} ) \, , \quad z \in \gg_\m(F)
\end{equation}
gives a non-archimedean theta function for $L$.
Up to multiplication by an element of $R^\times$, the function $\theta(z)$ has Fourier expansion
\begin{equation} \label{fourier}
 \theta(z) \equiv \sum_{u \in \zz} (-1)^u q^{\frac{u^2-u}{2}} \chi^u \, , 
\end{equation}
where $\chi^u $ is the character corresponding to $u \in \zz$.

Note that $L$ defines a principal polarization. Therefore as usual we set $Y=X$ and let $\varPhi \colon Y \isom X$ be the identity map. In particular we have an identification $Y=\zz$.

Let $\oo$ denote the trivial line bundle of $\Spec F$. Following \cite[Example~3.16]{frss}, from the expansion \eqref{fourier} we obtain that $(\oo,\varPhi,c)$ is a triple for $L$ with the trivialization $c \colon \zz \to \oo$ given by 
\begin{equation} \label{formula:c}
 u' \mapsto (-1)^u q^{\frac{u^2-u}{2}} \, , \quad u = u' \, , \quad u' \in  \zz \, . 
\end{equation}
Note that for the purpose of computing $\lambda'_D$ the actual rigidification of $L$ is immaterial. Hence, in the following, we shall assume $\oo$ is equipped with the canonical rigidification.

\subsection{Tropicalizations}

Set $\ell= - \log|q| $. The tropicalization map $\val \colon \gg_\m^\an \to \rr$ sends $z$ to $-\log|z|$. We obtain an identification of the canonical skeleton $\varSigma$ with the real torus  $ \rr/\ell \zz$. We can now write down the tropicalizations of the map $c \colon \zz \to \oo$ and of the non-archimedean theta function $\theta$ as follows.

From \eqref{formula:c} we obtain
\begin{equation} \label{eqn:formula_c_trop} 
 c_\trop(u')  = \ell \cdot \frac{u^2-u}{2}  \, , \quad u = u' \, , \quad u' \in  \zz \, .
\end{equation}
This leads to
\begin{equation}  \begin{split} c_{\trop,\rr}(\nu) & = \ell \cdot \frac{(\nu/\ell)^2-\nu/\ell}{2} \\
& =  \frac{\ell}{2}B_2(\nu/\ell) - \frac{1}{12} \ell
\end{split} 
\end{equation}
for $\nu \in\rr$, where $B_2(T) = T^2-T+1/6$ is the second Bernoulli polynomial.

The bilinear map $b \colon X \times Y \to \zz$ is identified with the map $\zz \times \zz \to \zz$ given by $(u,u') \mapsto \ell uu'$. 
It follows that for the inverse inner product $[\cdot,\cdot]$ on $X_\rr^*=\rr$ we have $[\nu,\mu] = \nu\mu/\ell$ for $\mu, \nu \in \rr$. In these terms,
upon tropicalizing the Fourier expansion \eqref{fourier} we find
\begin{equation} \begin{split} \theta_\trop(\nu) & = \min_{u \in \zz} \left\{ \ell \cdot \frac{u^2-u}{2} + u \cdot \nu \right\} \\
 & = \min_{u' \in \zz} \left\{ \frac{1}{2} \left[\ell u',\ell u'\right] + \left[\ell u',\nu - \frac{\ell}{2} \right] \right\} \\
\end{split} 
\end{equation}
for $\nu \in \rr$. That is to say
\begin{equation} \label{eqn:theta_and_Psi}
 \theta_\trop = \varPsi \circ \t_{-\ell/2}  \, , 
\end{equation}
in accordance with Proposition~\ref{prop:tropical_Riem}. We see that the tropical theta characteristic of $L$ is the unique \emph{non-trivial} $2$-torsion point in $\rr/\ell\zz$.

Note that $\theta_\trop$ vanishes for $\nu \in [0,\ell]$. This leads to the formula
\begin{equation} \begin{split} \label{eqn:theta_trop_abs}
\|\theta_\trop\| (\nu) & = \theta_\trop(\nu) + c_{\trop,\rr}(\nu) \\
 & = c_{\trop,\rr}(\nu) \\
 & =  \frac{\ell}{2}B_2(\nu/\ell) - \frac{1}{12} \ell 
\end{split} 
\end{equation}
for $\nu \in [0,\ell]$.
 
\subsection{Explicit formulas}

Let $s$ be the global section of $L$ corresponding to the  theta function $\theta$ as in \eqref{fourier}. 
Applying Proposition~\ref{prop:ex_8.15} we find
\begin{equation} \label{eqn:with_B2}
 \begin{split} -\log \|s(x)\|_L & = c_{\trop,\rr}(\val(z)) - \log |\theta(z)| \\
   & = \frac{\ell}{2}B_2\left(\frac{\log|z|}{\log |q|}\right) - \log |\theta(z)| - \frac{1}{12} \ell 
\end{split} 
\end{equation}
for $x \in A^\an \setminus |D^\an| $ and $z \in \gg_\m^\an$ with $p(z) = x$.

Next, by Proposition~\ref{prop:f_trop_log_s}, we have 
\[  -\log\|s\|_L=\|\theta_\trop\| \]
as functions on $\varSigma = \rr/\ell\zz$. Let $\mu_H$ denote the Haar measure on $\varSigma$. Applying \eqref{eqn:theta_trop_abs} gives
\begin{equation} \label{eqn:integral}  \int_{A^\an} \log \|s\|_L \, \d \, \mu_H = - \int_0^\ell  \frac{\ell}{2}B_2(\nu/\ell)   \frac{\d \nu}{\ell} +  \frac{1}{12} \ell = \frac{1}{12} \ell \, . 
\end{equation}
 Combining  \eqref{eqn:with_B2} and \eqref{eqn:integral}  we find
 \[ \lambda'_D(x) = \frac{\ell}{2}B_2\left(\frac{\log|z|}{\log |q|}\right) - \log |\theta(z)| \]
for $x \in A^\an \setminus |D^\an| $ and $z \in \gg_\m(F)$ lifting $x$.

Let $\nn$ be the N\'eron model of $A$ over $R$. As is well known, the group of connected components $\varPhi_\nn \cong \zz/\ell\zz$ has a canonical (up to orientation) cyclic structure. One can obtain this cyclic structure either from the canonical embedding of $\varPhi_\nn \cong \zz/\ell\zz$ in the circle $\varSigma \cong \rr/\ell\zz$, or from embedding $\nn$ into the minimal regular model of $A$ over $R$, whose special fiber is a chain of $\mathbb{P}^1$'s. 

We pick one of the orientations of $\varPhi_\nn$ and accordingly label the elements of $\varPhi_\nn$ by the integers $i=0,\ldots,\ell-1$. Let $x \in A(F) \setminus |D|$. Applying Proposition~\ref{prop:normalized_classical} we find
 \[  \lambda'_D(x) = \mathbf{i}(x,D) + \frac{\ell}{2}B_2\left(i/\ell \right) \, ,  \]
if $\sp(x) = i \bmod \ell \zz$. This agrees with the outcome of calculations done in \cite[pp.\ 317--318]{neron} and in \cite[Chapitre~III.1.5]{mb}.

\section{Local-to-global formula for the canonical height} \label{sec:local-global}

Let $K$ be a number field. Let $A$ be an abelian variety over~$K$, and let $L$ be an ample line bundle on $A$.    The aim of this section is to give a version of N\'eron's local-to-global formula for the canonical height  $\h_L \colon A(K) \to \rr$ (see \cite[\S II.14]{neron},  \cite[Chapter~11]{lang}) in which {\em no unspecified constants} occur. 

Let $M_{K,0}$ be the set of non-archimedean places of $K$, let $M_{K,\infty}$ be the set of archimedean places of $K$, and set $M_K = M_{K,0} \sqcup M_{K,\infty}$. For each $v \in M_K$ let $Nv$ be the ``usual'' local factor defined as follows. If $v \in M_{K,0}$, then $Nv$ is the cardinality of the residue field at $v$. If $v \in M_{K,\infty}$, then $\log Nv=1$ if $v$ is real, and $\log Nv=2$ if $v$ is a pair of complex conjugate embeddings. 

Choose a rigidification of $L$.
 For each $v \in M_K$ let $\|\cdot\|_{L,v}$ denote the canonical metric on $L_v$ over $A_v^\an$ determined by the induced rigidification of $L_v$. For $v$ non-archimedean this is as discussed in \S\ref{sec:taut_metric}; for $v$ archimedean this is the unique smooth hermitian metric on $L_v$ which is compatible with the rigidification and has translation-invariant curvature form. 
 
Let $s \in H^0(A,L)$ be a non-zero global section of $L$, and write $D=\divisor_L(s)$. Let $x \in A(K) \setminus |D|$. We have the local-to-global formula
\begin{equation} \label{eq:local_global}
 \h_L(x) = -  \frac{1}{[K:\qq]}  \sum_{v \in M_K} \log \| s (x_v) \|_{L,v} \log Nv \, ,  
\end{equation}
where for each $v \in M_K$ we denote by $x_v$ the image of $x$ in $A(K_v)$ under the inclusion map $A(K) \to A(K_v)$. See, for instance, \cite[Corollary~9.5.14]{bg_heights}.

When $v$ is a non-archimedean place, we have an explicit formula for the local contribution $-\log \|s(x_v)\|_{L,v}$ by Theorem~\ref{thm:A}(ii). When $v$ is an archimedean place, one has a classical explicit formula for the local contribution $-\log \|s(x_v)\|_{L,v}$, in terms of \emph{normalized} complex theta functions for the divisor~$D$, see for instance \cite[Chapter~7]{neron} or \cite[Chapter~13]{lang}. 

This leads to the following result.
\begin{thm} \label{thm:B} Assume that $A$ has split semi-abelian reduction over $K$. Let $L$ be a rigidified ample line bundle over $A$. Let $s$ be a non-zero global section of $L$, and set $D = \divisor_L(s)$. For each $v \in M_K$ let a function 
\[ \lambda_{D,v} \colon A(K_v) \setminus |D_v| \to \rr \]
be given as follows.  

When $v$ is an archimedean place, let $T_{A,v}$ be the tangent space at the origin of the complex abelian variety $A_v^\an = A(\overline{K}_v)$, let $f_v \colon T_{A,v} \to \mathbb{C}$ be the normalized complex theta function associated to the section $s$ at $v$ and the induced rigidification of $L_v$, and let  $H_v \colon T_{A,v} \times T_{A,v} \to \mathbb{C}$ be the Riemann form of $L_v$. Let $x_v \in A(K_v) \setminus |D_v|$. If $z_v \in T_{A,v}$ is such that $\exp(z_v) = x_v$, we set
\begin{equation} \label{eqn:local_arch}  
\lambda_{D,v}(x_v) = - \log |f_v(z_v)| + \frac{\pi}{2}  H_v(z_v,z_v)   \, . 
\end{equation}
This does not depend on the choice of $z_v \in T_{A,v}$ lifting $x_v$.

When $v$ is a non-archimedean place, let $f_v$ be any theta function associated to the section~$s$ at~$v$, and let $\|f_{\trop,v}\| \colon \varSigma_v \to \rr$ denote the associated normalized tropicalized theta function, where $\varSigma_v \subset A_v^\an$ is the canonical skeleton. Let $\varPhi_v$ be the group of connected components of the N\'eron model of $A$ at $v$; then $\varPhi_v$ is canonically a finite subgroup of $\varSigma_v$. For every $x_v \in A(K_v) \setminus |D_v|$ we set
\begin{equation}  \label{eqn:local_non_arch}
\lambda_{D,v}(x_v) = \mathbf{i}_v(x_v,D_v) + \|f_{\trop,v}\|(x_{v,0}) \, . 
 \end{equation}
Here $x_{v,0}$ is the element of $\varPhi_v$ to which $x_v$ specializes, and $\mathbf{i}_v( x_v , D_v)$ denotes the intersection multiplicity of the Zariski closure of $x_v$ and the thickening of $D_v$, taken on the N\'eron model of $A$ at $v$.

Now let $x \in A(K) \setminus |D|$. Then we have the local-to-global formula
\[ \h_L(x) =  \frac{1}{[K:\qq]} \sum_{v \in M_K} \lambda_{D,v}(x_v) \log Nv \, ,  \]
where $\h_L$ is the canonical height associated to $L$, and where for each $v \in M_K$ we denote by $x_v$ the image of $x$ in $A(K_v)$ under the inclusion map $A(K) \to A(K_v)$.
\end{thm}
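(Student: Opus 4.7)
My plan is to deduce the formula by substituting, place by place, an explicit description of $-\log\|s(x_v)\|_{L,v}$ into the classical local-to-global formula~\eqref{eq:local_global}, namely
\[ \h_L(x) = -\frac{1}{[K:\qq]}\sum_{v \in M_K} \log\|s(x_v)\|_{L,v}\,\log Nv, \]
which comes from \cite[Corollary~9.5.14]{bg_heights}. After this substitution the theorem reduces to the pointwise identity $-\log\|s(x_v)\|_{L,v} = \lambda_{D,v}(x_v)$ at each $v$.

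At a non-archimedean place, this identity is precisely Theorem~\ref{thm:A}(ii): the split semi-abelian reduction hypothesis places $A_{K_v}$ squarely within the setting of \S\ref{sec:Neron}, and the output of that theorem is exactly~\eqref{eqn:local_non_arch}, in which the customary unspecified additive constant has been replaced by the explicit evaluation $\|f_{\trop,v}\|(x_{v,0})$. At an archimedean place, one appeals instead to the classical complex-analytic description of the canonical metric. Once a uniformization $A_v^{\an} \cong T_{A,v}/\Lambda_v$ is fixed, the rigidified ample line bundle $L_v$ corresponds to a normalized complex theta function $f_v$ on $T_{A,v}$ with Riemann form $H_v$, and the unique translation-invariant-curvature metric compatible with the rigidification is given by
\[ \|s(x_v)\|_{L,v} = |f_v(z_v)|\,\exp\!\bigl(-\tfrac{\pi}{2}H_v(z_v,z_v)\bigr) \]
for any lift $z_v \in T_{A,v}$ of $x_v$; see \cite[Chapter~7]{neron} or \cite[Chapter~13]{lang}. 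Taking $-\log$ recovers~\eqref{eqn:local_arch}, while the cocycle transformation law satisfied by $f_v$ makes the right-hand side independent of the chosen lift.

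I expect no serious obstacle beyond careful bookkeeping. The one point that genuinely requires attention is that the normalizations line up between the archimedean and non-archimedean recipes: the coefficient $\pi/2$ in front of $H_v$, the normalization of $f_v$, and the convention for the rigidification must all be those producing the canonical metric entering~\eqref{eq:local_global}, and analogously on the non-archimedean side the canonical metric $\|\cdot\|_{L,v}$ must be the same one used in the construction of $\|f_{\trop,v}\|$ in \S\ref{sec:taut_metric}. Once this is verified, inserting the two local identities into~\eqref{eq:local_global} yields the asserted formula with no further computation.
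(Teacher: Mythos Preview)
Your proposal is correct and follows essentially the same route as the paper: start from the local-to-global formula~\eqref{eq:local_global} and substitute, at each non-archimedean place, the identity from Theorem~\ref{thm:A}(ii), and at each archimedean place, the classical formula for the canonical metric in terms of normalized complex theta functions (citing \cite[Chapter~7]{neron} and \cite[Chapter~13]{lang}). The paper presents this argument in the paragraph immediately preceding the theorem rather than in a separate proof environment, but the content is identical.
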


A similar result (with a similar proof) holds for abelian varieties defined over function fields. 

\begin{remark} Let $v \in M_K$ be non-archimedean, and let $\mu_{H,v}$ denote the Haar measure, with volume normalized to one, on the canonical skeleton $\varSigma_v \subset A_v^\an$. By pushforward we obtain a measure, also denoted $\mu_{H,v}$, on $A_v^\an$. From Definition~\ref{def:normalized} we recall the normalized canonical local height at $v$ associated to the divisor $D$,
\begin{equation} \label{eqn:def_our_normalized_bis}
 \lambda'_{D,v}(x_v) = -\log \|s(x_v)\|_{L,v} +  \int_{A_v^\an} \log \|s\|_{L,v} \, \d \, \mu_{H,v} \, , \quad x_v \in A_v^\an \setminus |D_v^\an| \, .
\end{equation}
When $v \in M_K$ is \emph{archimedean}, the very same formula defines a normalized canonical local height at $v$ associated to $D$, where now we interpret $A_v^\an$ as the set of complex valued points $A(\overline{K}_v)$ of $A$ and $\mu_{H,v}$ is the Haar measure on the complex torus $A(\overline{K}_v)$, with volume normalized to one.

Define the invariant
\[ \kappa'(A,D) = \frac{1}{[K:\qq]} \sum_{v \in M_K} \int_{A_v^\an} \log \|s\|_{L,v} \, \log Nv \, \d \, \mu_{H,v} \, , \]
which is readily checked to be a finite sum, independent of the choice of the section~$s$ by the product formula. Combining with \eqref{eq:local_global} we find the local-to-global formula
\[  \h_L(x) = - \kappa'(A,D) +  \frac{1}{[K:\qq]}  \sum_{v \in M_K} \lambda'_{D,v}(x_v) \log Nv  \]
for  $x \in A(K) \setminus |D|$, where $\h_L$ is the canonical height associated to $L$.

Assume that the divisor $D$ is \emph{integral}. Setting $g=\dim(A)$, it can be shown that
\[ \kappa'(A,D) = g \cdot \h_L(D) \, , \]
with $\h_L(D) \in \rr_{\ge 0}$ the N\'eron--Tate height \cites{bost_duke, bgs, gu-hohen, ph, zhsmall} of the subvariety $D \subset A$ with respect to the ample line bundle $L$. We arrive at the interesting formula
\[  \h_L(x) = - g \cdot \h_L(D) +  \frac{1}{[K:\qq]}  \sum_{v \in M_K} \lambda'_{D,v}(x_v) \log Nv  \]
for  $x \in A(K) \setminus |D|$. We intend to discuss some applications of this identity in future work.
\end{remark}

\begin{bibdiv}
\begin{biblist}

\bib{an}{article}{
      author={Alexeev, Valery},
      author={Nakamura, Iku},
       title={On {M}umford's construction of degenerating abelian varieties},
        date={1999},
        ISSN={0040-8735},
     journal={Tohoku Math. J. (2)},
      volume={51},
      number={3},
       pages={399\ndash 420},
  url={https://doi-org.proxy.library.cornell.edu/10.2748/tmj/1178224770},
      review={\MR{1707764}},
}

\bib{be}{book}{
      author={Berkovich, Vladimir~G.},
       title={Spectral theory and analytic geometry over non-{A}rchimedean
  fields},
      series={Mathematical Surveys and Monographs},
   publisher={American Mathematical Society, Providence, RI},
        date={1990},
      volume={33},
        ISBN={0-8218-1534-2},
      review={\MR{1070709}},
}

\bib{bg_heights}{book}{
      author={Bombieri, Enrico},
      author={Gubler, Walter},
       title={Heights in {D}iophantine geometry},
      series={New Mathematical Monographs},
   publisher={Cambridge University Press, Cambridge},
        date={2006},
      volume={4},
        ISBN={978-0-521-84615-8; 0-521-84615-3},
         url={https://doi.org/10.1017/CBO9780511542879},
      review={\MR{2216774}},
}

\bib{bl}{article}{
      author={Bosch, Siegfried},
      author={L\"utkebohmert, Werner},
       title={Degenerating abelian varieties},
        date={1991},
        ISSN={0040-9383},
     journal={Topology},
      volume={30},
      number={4},
       pages={653\ndash 698},
  url={https://doi-org.proxy.library.cornell.edu/10.1016/0040-9383(91)90045-6},
      review={\MR{1133878}},
}

\bib{bgs}{article}{
    author = {Bost, Jean-Beno\^it},
    author={Gillet, Henri},
    author={Soul\'e, Christophe},
    review={\MR{1260106}},
     title = {Heights of projective varieties and positive {G}reen forms},
   journal = {J. Amer. Math. Soc.},
    volume = {7},
     date = {1994},
    number = {4},
     pages = {903--1027},
      ISSN = {0894-0347},
      url = {https://doi-org.proxy.library.cornell.edu/10.2307/2152736},
}

\bib{bost_duke}{article}{
    author = {Bost, Jean-Beno\^it},
     title = {Intrinsic heights of stable varieties and abelian varieties},
   journal = {Duke Math. J.},
  review={\MR{1387221}},
    volume = {82},
      year = {1996},
    number = {1},
     pages = {21--70},
      ISSN = {0012-7094},
       url = {https://doi-org.proxy.library.cornell.edu/10.1215/S0012-7094-96-08202-2},
}

\bib{cl}{article}{
      author={Chambert-Loir, Antoine},
       title={Mesures et \'equidistribution sur les espaces de {B}erkovich},
        date={2006},
        ISSN={0075-4102},
     journal={J. Reine Angew. Math.},
      volume={595},
       pages={215\ndash 235},
  url={https://doi-org.proxy.library.cornell.edu/10.1515/CRELLE.2006.049},
      review={\MR{2244803}},
}

\bib{cl_overview}{incollection}{
      author={Chambert-Loir, Antoine},
       title={Heights and measures on analytic spaces. {A} survey of recent
  results, and some remarks},
        date={2011},
   booktitle={Motivic integration and its interactions with model theory and
  non-{A}rchimedean geometry. {V}olume {II}},
      series={London Math. Soc. Lecture Note Ser.},
      volume={384},
   publisher={Cambridge Univ. Press, Cambridge},
       pages={1\ndash 50},
      review={\MR{2885340}},
}

\bib{clt}{article}{
      author={Chambert-Loir, Antoine},
      author={Thuillier, Amaury},
       title={Mesures de {M}ahler et \'equidistribution logarithmique},
        date={2009},
        ISSN={0373-0956},
     journal={Ann. Inst. Fourier (Grenoble)},
      volume={59},
      number={3},
       pages={977\ndash 1014},
  url={http://aif.cedram.org.proxy.library.cornell.edu/item?id=AIF_2009__59_3_977_0},
      review={\MR{2543659}},
}

\bib{fc}{book}{
      author={Faltings, Gerd},
      author={Chai, Ching-Li},
       title={Degeneration of abelian varieties},
      series={Ergebnisse der Mathematik und ihrer Grenzgebiete (3)},
   publisher={Springer-Verlag, Berlin},
        date={1990},
      volume={22},
        ISBN={3-540-52015-5},
  url={https://doi-org.proxy.library.cornell.edu/10.1007/978-3-662-02632-8},
      review={\MR{1083353}},
}

\bib{frss}{article}{
      author={Foster, Tyler},
      author={Rabinoff, Joseph},
      author={Shokrieh, Farbod},
      author={Soto, Alejandro},
       title={Non-{A}rchimedean and tropical theta functions},
        date={2018},
        ISSN={0025-5831},
     journal={Math. Ann.},
      volume={372},
      number={3-4},
       pages={891\ndash 914},
         url={https://doi.org/10.1007/s00208-018-1646-3},
      review={\MR{3880286}},
}

\bib{gu-hohen}{article}{
    author = {Gubler, Walter},
     title = {H\"ohentheorie},
      note = {With an appendix by J\"urg Kramer},
   journal = {Math. Ann.},
    volume = {298},
      year = {1994},
    number = {3},
     pages = {427--455},
      ISSN = {0025-5831},
    review = {\MR{1262769}},
       url = {https://doi-org.proxy.library.cornell.edu/10.1007/BF01459743},
}

\bib{gu_trop}{article}{
      author={Gubler, Walter},
       title={Tropical varieties for non-{A}rchimedean analytic spaces},
        date={2007},
        ISSN={0020-9910},
     journal={Invent. Math.},
      volume={169},
      number={2},
       pages={321\ndash 376},
  url={https://doi-org.proxy.library.cornell.edu/10.1007/s00222-007-0048-z},
      review={\MR{2318559}},
}

\bib{gu}{article}{
      author={Gubler, Walter},
       title={Non-{A}rchimedean canonical measures on abelian varieties},
        date={2010},
        ISSN={0010-437X},
     journal={Compos. Math.},
      volume={146},
      number={3},
       pages={683\ndash 730},
  url={https://doi-org.proxy.library.cornell.edu/10.1112/S0010437X09004679},
      review={\MR{2644932}},
}

\bib{gub_kun}{article}{
      author={Gubler, Walter},
      author={K\"{u}nnemann, Klaus},
       title={A tropical approach to nonarchimedean {A}rakelov geometry},
        date={2017},
        ISSN={1937-0652},
     journal={Algebra Number Theory},
      volume={11},
      number={1},
       pages={77\ndash 180},
         url={https://doi.org/10.2140/ant.2017.11.77},
      review={\MR{3602767}},
}

\bib{hag}{book}{
      author={Hartshorne, Robin},
       title={Algebraic geometry},
      series={Graduate Texts in Mathematics, No. 52},
   publisher={Springer-Verlag, New York-Heidelberg},
        date={1977},
        ISBN={0-387-90244-9},
      review={\MR{463157}},
}

\bib{hi}{unpublished}{
      author={Hindry, Marc},
       title={Sur les hauteurs locales de {N}\'eron sur les vari\'et\'es
  ab\'eliennes},
        date={1993},
        note={Unpublished manuscript, available at
  \url{https://webusers.imj-prg.fr/~marc.hindry/Neron.pdf}},
}

\bib{dj_local_Galois}{article}{
      author={de~Jong, Robin},
       title={Local heights on {G}alois covers of the projective line},
        date={2012},
        ISSN={0065-1036},
     journal={Acta Arith.},
      volume={152},
      number={1},
       pages={51\ndash 70},
         url={https://doi.org/10.4064/aa152-1-5},
      review={\MR{2869210}},
}

\bib{ku}{article}{
      author={K\"unnemann, Klaus},
       title={Projective regular models for abelian varieties, semistable
  reduction, and the height pairing},
        date={1998},
        ISSN={0012-7094},
     journal={Duke Math. J.},
      volume={95},
      number={1},
       pages={161\ndash 212},
  url={https://doi-org.proxy.library.cornell.edu/10.1215/S0012-7094-98-09505-9},
      review={\MR{1646554}},
}

\bib{langElliptic}{book}{
      author={Lang, Serge},
       title={Elliptic curves: {D}iophantine analysis},
      series={Grundlehren der Mathematischen Wissenschaften},
   publisher={Springer-Verlag, Berlin-New York},
        date={1978},
      volume={231},
        ISBN={3-540-08489-4},
      review={\MR{518817}},
}

\bib{lang}{book}{
      author={Lang, Serge},
       title={Fundamentals of {D}iophantine geometry},
   publisher={Springer-Verlag, New York},
        date={1983},
        ISBN={0-387-90837-4},
         url={https://doi.org/10.1007/978-1-4757-1810-2},
      review={\MR{715605}},
}

\bib{mb}{article}{
      author={Moret-Bailly, Laurent},
       title={Pinceaux de vari\'et\'es ab\'eliennes},
        date={1985},
        ISSN={0303-1179},
     journal={Ast\'erisque},
      number={129},
       pages={266},
      review={\MR{797982}},
}

\bib{mu}{article}{
      author={Mumford, David},
       title={An analytic construction of degenerating abelian varieties over
  complete rings},
        date={1972},
        ISSN={0010-437X},
     journal={Compositio Math.},
      volume={24},
       pages={239\ndash 272},
      review={\MR{0352106}},
}

\bib{neron}{article}{
      author={N\'{e}ron, Andr\'e},
       title={Quasi-fonctions et hauteurs sur les vari\'{e}t\'{e}s
  ab\'{e}liennes},
        date={1965},
        ISSN={0003-486X},
     journal={Ann. of Math. (2)},
      volume={82},
       pages={249\ndash 331},
         url={https://doi.org/10.2307/1970644},
      review={\MR{179173}},
}

\bib{ph}{article}{
    author = {Philippon, Patrice},
     title = {Sur des hauteurs alternatives. {I}},
   journal = {Math. Ann.},
    volume = {289},
      year = {1991},
    number = {2},
     pages = {255--283},
      ISSN = {0025-5831},
    review=  {\MR{1092175}},
      url = {https://doi-org.proxy.library.cornell.edu/10.1007/BF01446571},
}

\bib{lectures_mordell}{book}{
      author={Serre, Jean-Pierre},
       title={Lectures on the {M}ordell-{W}eil theorem},
     edition={Third},
      series={Aspects of Mathematics},
   publisher={Friedr. Vieweg \& Sohn, Braunschweig},
        date={1997},
        ISBN={3-528-28968-6},
         url={https://doi.org/10.1007/978-3-663-10632-6},
      review={\MR{1757192}},
}

\bib{sil_advanced}{book}{
      author={Silverman, Joseph~H.},
       title={Advanced topics in the arithmetic of elliptic curves},
      series={Graduate Texts in Mathematics},
   publisher={Springer-Verlag, New York},
        date={1994},
      volume={151},
        ISBN={0-387-94328-5},
         url={https://doi.org/10.1007/978-1-4612-0851-8},
      review={\MR{1312368}},
}

\bib{turner}{article}{
      author={Turner, Stuart},
       title={Non-{A}rchimedean theta functions and the {N}\'{e}ron symbol},
        date={1985},
        ISSN={0022-314X,1096-1658},
     journal={J. Number Theory},
      volume={21},
      number={1},
       pages={1\ndash 16},
         url={https://doi.org/10.1016/0022-314X(85)90009-5},
      review={\MR{804913}},
}

\bib{we}{article}{
      author={Werner, Annette},
       title={Local heights on abelian varieties with split multiplicative
  reduction},
        date={1997},
        ISSN={0010-437X},
     journal={Compositio Math.},
      volume={107},
      number={3},
       pages={289\ndash 317},
  url={https://doi-org.proxy.library.cornell.edu/10.1023/A:1000139010486},
      review={\MR{1458753}},
}

\bib{zhsmall}{article}{
      author={Zhang, Shouwu},
       title={Small points and adelic metrics},
        date={1995},
        ISSN={1056-3911},
     journal={J. Algebraic Geom.},
      volume={4},
      number={2},
       pages={281\ndash 300},
      review={\MR{1311351}},
}

\end{biblist}
\end{bibdiv}

\end{document}